\theoremstyle{definition}
\newtheorem{definition}{Definition}[section]
\newtheorem{remark}{Remark}
\newtheorem{theorem}{Theorem}
\newtheorem{corollary}{Corollary}
\newtheorem{example}{Example}
\newtheorem{lemma}{Lemma}
\theoremstyle{empty}
\newtheorem{duplicate}{Theorem}
\title{Generalized Lattice Point Visibility in $\mathbb{N}^k$}
\author{Carolina Benedetti}
\address{Departamento de Matem\'aticas\\Universidad de los Andes\\Bogot\'a\\Colombia} 
\email{c.benedetti@uniandes.edu.co}
\thanks{C. Benedetti was supported by grant FAPA of the Faculty of Science at Universidad de los Andes.}
\author{Santiago Estupi\~nan}
\address{Departamento de Matem\'aticas\\Universidad de los Andes\\Bogot\'a\\Colombia}
\email{s.estupinan10@uniandes.edu.co}
\thanks{}
\author{Pamela E. Harris}
\address{Department of Mathematics and Statistics, Williams College, United States}
\email{peh2@williams.edu}
\thanks{P.\,E. Harris was supported by NSF award DMS-1620202.}
\DeclareMathOperator{\lcm}{lcm}
\date{\today}
\newcommand\bb{{\bf{b}}}
\newcommand\ff{{\bf{f}}}
\newcommand\g{{\bf{g}}}
\newcommand\nn{{\bf{n}}}
\newcommand\Z{{\mathbb{Z}}}
\newcommand\N{{\mathbb{N}}}
\newcommand\R{{\mathbb{R}}}
\newcommand\Q{{\mathbb{Q}}}
\newcommand\F{{\mathcal{F}}}
\begin{document}

\maketitle

\begin{abstract}
    A lattice point $(r,s)\in\mathbb{N}^2$ is said to be visible from the origin if no other integer lattice point lies on the line segment joining 
    the origin and $(r,s)$. 
    It is a well-known result that the proportion of lattice points visible from the origin is given by $\frac{1}{\zeta(2)}$, where $\zeta(s)=\sum_{n=1}^\infty\frac{1}{n^s}$ denotes the Riemann zeta function. 
    Goins, Harris, Kubik and Mbirika, generalized the notion of lattice point visibility by saying that for a fixed $b\in\mathbb{N}$, a lattice point $(r,s)\in\mathbb{N}^2$ is $b$-visible from the origin if no other lattice point lies on the graph of a function $f(x)=mx^b$, for some $m\in\mathbb{Q}$, between the origin and $(r,s)$. In their analysis they establish that for a fixed $b\in\mathbb{N}$, the proportion of $b$-visible lattice points is $\frac{1}{\zeta(b+1)}$, which generalizes the result in the classical lattice point visibility setting. In this short note we give an $n$-dimensional notion of $\bf{b}$-visibility that recovers the one presented by Goins et. al. in $2$-dimensions, and the classical notion in $n$-dimensions. We prove that for a fixed ${\bf{b}}=(b_1,b_2,\ldots,b_n)\in\mathbb{N}^n$ the proportion of ${\bf{b}}$-visible lattice points is given by $\frac{1}{\zeta(\sum_{i=1}^nb_i)}$. 
    
    Moreover, we propose a $\bf{b}$-visibility notion for vectors $\bf{b}\in \Q_{>0}^n$, and we show that by imposing weak conditions on those vectors one obtains that the density of ${\bf{b}}=(\frac{b_1}{a_1},\frac{b_2}{a_2},\ldots,\frac{b_n}{a_n})\in\mathbb{Q}_{>0}^n$-visible points is $\frac{1}{\zeta(\sum_{i=1}^nb_i)}$. Finally, we give a notion of visibility for vectors $\bf{b}\in (\Q^{*})^n$, compatible with the previous notion, that recovers the results of Harris and Omar for $b\in \Q^{*}$ in $2$-dimensions; and show that the proportion of $\bf{b}$-visible points in this case only depends on the negative entries of $\bf{b}$.
\end{abstract}

\section{Introduction}
In classical lattice point visibility, a point $(r,s)$ in the integer lattice $\Z^2$ is said to be visible (from the origin) if the line segment joining the origin $(0,0)$ and the point $(r,s)$ contains no other integer lattice points. One well-known result
is that the proportion of visible lattice points in $\Z^2$ is given by $1/\zeta(2)$, where $\zeta(s)=\sum_{n=1}^{\infty}\frac{1}{n^s}$  denotes the Riemann zeta function \cite{apostol}. In fact, a similar argument establishes that for $k\geq 2$ the proportion of visible lattice points in $\Z^k$ (analogously defined)
is given by $1/\zeta(k)$ \cite{Christopher}.

In 2017, Goins, Harris, Kubik and Mbirika generalized the classical definition of lattice point visibility by fixing $b\in\mathbb{N}$ and considering curves of the form $f(x)=ax^b$ with $a\in \mathbb{Q}$, see~\cite{GHKM2017}. 
In this new setting, a lattice point $(r,s)\in\mathbb{N}^2$ is said to be $b$-visible if it lies on the graph of a curve of the form $f(x)=ax^b$ with $a\in \mathbb{Q}$ and there are no other integer lattice points lying on this curve between $(0,0)$ and $(r,s)$. Hence, setting $b=1$ recovers the classical definition of lattice point visibility.
In the $b$-visibility setting, Goins et. al.  established that the proportion of $b$-visible lattice points in $\N^2$ is given by $\frac{1}{\zeta(b+1)}$. Harris and Omar expanded this work to power functions with rational exponents by establishing that the proportion of $(b/a)$-visible lattice points in $\N_a^2$ (the set nonnegative integers that are $a$th powers) is given by $\frac{1}{\zeta(b+1)}$, and the proportion of $(-b/a)$-visible lattice points in $\N_a^2$ is given by $\frac{1}{\zeta(b)}$  \cite{HarrisOmar2017}.

In this work we extend the notion of visibility to lattice points in $\N^k$. 

First we do it for the classical notion and the one developed my Goins et al, by fixing  $\bb=(b_1,b_2,\ldots,b_k)\in\N^k$, and saying that a lattice point $\nn=(n_1,n_2,\ldots,n_k)\in\N^k$ is $\bb$-visible if there does not exists a positive real number $0<t<1$ such that $(n_1t^{b_1},n_2t^{b_2},\ldots,n_kt^{b_k})$ is a lattice point in $\mathbb{N}^k$ (Definition \ref{def:simpleone}). Using this definition, we prove the following result regarding the proportion of $\bb$-visible points in $\N^k$. 

\begin{theorem}%[Main Theorem 1]
\label{thm:main}
Fix $k\in\mathbb{N}$ and ${\bf{b}}=(b_1,b_2,\ldots,b_k)\in \mathbb{N}^k$, with ${\bf{b}}$ satisfying the condition that $\gcd(b_1,b_2,\ldots,b_k)=1$.
Then the proportion of points ${\bf{n}}\in \mathbb{N}^k$ that are $\bf{b}$-visible is $\frac{1}{\zeta\left(\sum_{i=1}^{k}b_i\right)}$.
\end{theorem}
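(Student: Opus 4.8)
The plan is to reduce $\bb$-visibility to a prime-indexed divisibility condition and then count visible points by inclusion--exclusion. Throughout, points $\nn=(n_1,\dots,n_k)$ with some coordinate equal to $0$ form a set of density $0$ inside a cube $[1,N]^k$, so they do not affect the limiting proportion and I assume every $n_i\ge 1$. The key structural claim (Step~1) is: such an $\nn$ fails to be $\bb$-visible \emph{if and only if} there is a prime $\ell$ with $\ell^{b_i}\mid n_i$ for all $i$. One direction is immediate: if such $\ell$ exists, then $t=1/\ell\in(0,1)$ gives $n_it^{b_i}=n_i/\ell^{b_i}\in\Z$ for all $i$, so $\nn$ is not visible. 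For the converse, suppose $0<t<1$ satisfies $n_it^{b_i}\in\N$ for every $i$. Then each $t^{b_i}=(n_it^{b_i})/n_i$ is a positive rational, and since $\gcd(b_1,\dots,b_k)=1$ we may pick integers $c_1,\dots,c_k$ with $\sum_i c_ib_i=1$, whence $t=\prod_i (t^{b_i})^{c_i}$ is a product of positive rationals raised to integer exponents, hence $t\in\Q_{>0}$. Writing $t=a/q$ in lowest terms, $0<a<q$ forces $q\ge 2$; from $q^{b_i}\mid n_ia^{b_i}$ and $\gcd(a,q)=1$ we get $q^{b_i}\mid n_i$ for every $i$, and any prime divisor of $q$ does the job. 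This is the only place the hypothesis $\gcd(b_i)=1$ is used, and it is essential, since otherwise irrational solutions such as $t=1/\sqrt 2$ arise.

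For Step~2 (inclusion--exclusion), work inside $[1,N]^k\cap\Z^k$ and set $V_\ell=\{\nn : \ell^{b_i}\mid n_i\ \text{for all }i\}$ for $\ell$ prime, so $|V_\ell|=\prod_{i=1}^k\lfloor N/\ell^{b_i}\rfloor$, and $V_\ell=\emptyset$ once $\ell^{\max_i b_i}>N$, leaving only finitely many relevant primes. For a squarefree $q=\ell_1\cdots\ell_r$, the intersection $V_{\ell_1}\cap\cdots\cap V_{\ell_r}$ is exactly $\{\nn : q^{b_i}\mid n_i\ \text{for all }i\}$, of size $\prod_i\lfloor N/q^{b_i}\rfloor$. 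By Step~1 the non-visible points in the cube are precisely $\bigcup_\ell V_\ell$, so inclusion--exclusion over the finite family of relevant primes, combined with $\mu(q)=0$ for non-squarefree $q$, yields
\[
\#\{\nn\in[1,N]^k : \nn\text{ is }\bb\text{-visible}\}=N^k-\Big|\bigcup_\ell V_\ell\Big|=\sum_{q\ge 1}\mu(q)\prod_{i=1}^k\Big\lfloor\frac{N}{q^{b_i}}\Big\rfloor,
\]
where the sum is finite because the product vanishes for $q>N^{1/\max_i b_i}$.

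Step~3 is the asymptotic analysis: divide by $N^k$ and let $N\to\infty$, with the goal $\sum_{q\ge1}\mu(q)q^{-B}=1/\zeta(B)$, where $B=\sum_{i=1}^k b_i$. For $k=1$ the only admissible $\bb$ is $(1)$ and both sides equal $0$, so assume $k\ge2$, hence $B\ge2$. Using $\lfloor N/q^{b_i}\rfloor=N/q^{b_i}+O(1)$ and $0\le\lfloor N/q^{b_i}\rfloor\le N/q^{b_i}$, expanding the product gives $\prod_i\lfloor N/q^{b_i}\rfloor=N^k/q^{B}+O(N^{k-1})$ uniformly in $q$, together with the one-sided bound $\prod_i\lfloor N/q^{b_i}\rfloor\le N^k/q^{B}$. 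Fix a cutoff $W=W(N)\to\infty$ with $W=o(N)$, say $W=\lfloor\sqrt N\rfloor$. Splitting the sum at $W$: after dividing by $N^k$, the range $q\le W$ contributes $\sum_{q\le W}\mu(q)q^{-B}+O(W/N)$, while the range $q>W$ contributes at most $\sum_{q>W}q^{-B}$ in absolute value. Letting $N\to\infty$, the first sum tends to $\sum_{q\ge1}\mu(q)q^{-B}$, and both error terms vanish since $B\ge2$; the standard identity $\sum_{q\ge1}\mu(q)q^{-B}=1/\zeta(B)$ then finishes the proof.

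I expect the only genuine obstacle to be the uniform control of the truncation error in Step~3: the sum has on the order of $N^{1/\max_i b_i}$ nonzero terms, each carrying an $O(N^{k-1})$ error, which in the classical case $\max_i b_i=1$ is not summable against $N^{-k}$ without first truncating. Introducing the intermediate cutoff $W$ and using $\lfloor N/q^{b_i}\rfloor\le N/q^{b_i}$ to bound the tail is precisely what legitimizes the interchange of limit and summation; Steps~1 and~2 are essentially bookkeeping.
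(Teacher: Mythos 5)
Your proof is correct. Step~1 is precisely the paper's Theorem~\ref{thm:ifandonlyif1} (non-visibility $\iff$ some prime $p$ has $p^{b_i}\mid n_i$ for all $i$), reproved the same way via the B\'ezout relation $\sum_i c_ib_i=1$ forcing $t\in\Q$. Where you genuinely diverge is the counting step. The paper follows Pinsky's probabilistic scheme: it introduces events $C_{p,N}$, proves exact independence when a product $R=\prod p_i^{b_i}$ divides $N$, sandwiches $P_N\bigl(\bigcap_i (C_{p_i,N})^c\bigr)$ between a truncated Euler product and that product minus a tail sum, and then interpolates to arbitrary $N$ using monotonicity of the visible-point count between $N_1=Rk_1$ and $N_2=Rk_2$. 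You instead run exact inclusion--exclusion over squarefree moduli to get the closed form $\sum_{q\ge1}\mu(q)\prod_i\lfloor N/q^{b_i}\rfloor$ for the number of visible points in $[1,N]^k$, and pass to the limit by splitting at $W=\lfloor\sqrt N\rfloor$ using the one-sided bound $\prod_i\lfloor N/q^{b_i}\rfloor\le N^k/q^{B}$. The two arguments are cousins (the paper's sandwich is a Bonferroni-type truncated inclusion--exclusion), but your route avoids the independence claim and the divisibility/interpolation step entirely, and is more explicit about uniformity of the $O(N^{k-1})$ error in $q$; the price is that you must invoke $B=\sum_i b_i\ge 2$, which you correctly isolate by disposing of the degenerate case $k=1$, $\bb=(1)$ separately.
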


For an $k\in\N$, note that setting
$\bb=(1,1,\ldots,1)\in\mathbb{N}^k$ in Theorem \ref{thm:main} recovers the proportionality result in classical lattice point visibility.

To state our next result, we consider $a_1,a_2,\ldots,a_k\in\mathbb{N}$ and define $\alpha:=\lcm(a_1,\ldots,a_k)$. Also for each $1\leq i\leq k$ we let $\mathbb{N}_{\frac{\alpha}{a_i}}$ denote the set of integers of the form $\ell^{\frac{\alpha}{a_i}}$ with $\ell\in\N$, and we say that $\gcd(\bb)=1$ if there is an integer linear combination of the entries of $\bb$ equal to $1$. 

\begin{theorem}\label{thm:main2}
Fix $k\in\mathbb{N}$ and  ${\bf{b}}=(\frac{b_1}{a_1},\frac{b_2}{a_2},\ldots,\frac{b_k}{a_k})\in{\mathbb{Q}_{>0}}^k$, with $\bf{b}$ satisfying the conditions that $a_1,a_2,\ldots,a_k\in\mathbb{N}$ and $\gcd(\frac{b_1}{a_1},\frac{b_2}{a_2},\ldots,\frac{b_k}{a_k})=1$.Then the proportion of points in $\mathbb{N}_{\frac{\alpha}{a_1}}\times \cdots \times \mathbb{N}_{\frac{\alpha}{a_n}}$ that are $\bf{b}$-visible is $\frac{1}{\zeta(\sum_{i=1}^kb_i)}$.
\end{theorem}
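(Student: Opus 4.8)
The plan is to deduce Theorem~\ref{thm:main2} from Theorem~\ref{thm:main} by pulling everything back along the coordinatewise power map
\[
\phi\colon \N^{k}\longrightarrow \N_{\frac{\alpha}{a_{1}}}\times\cdots\times\N_{\frac{\alpha}{a_{k}}},\qquad
\phi(\ell_{1},\ldots,\ell_{k})=\bigl(\ell_{1}^{\,\alpha/a_{1}},\ldots,\ell_{k}^{\,\alpha/a_{k}}\bigr).
\]
Since $x\mapsto x^{\alpha/a_{i}}$ is a bijection $\N\to\N_{\alpha/a_{i}}$, the map $\phi$ is a bijection, and being coordinatewise increasing it carries the cube $[1,N]^{k}$ onto the box $[1,N^{\alpha/a_{1}}]\times\cdots\times[1,N^{\alpha/a_{k}}]$; this lets one transport densities back and forth (the density behind Theorem~\ref{thm:main} is unchanged if cubes are replaced by boxes all of whose sides tend to infinity). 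Set $\mathbf c:=(b_{1},b_{2},\ldots,b_{k})\in\N^{k}$. The heart of the matter will be the claim that, for every $\boldsymbol{\ell}=(\ell_{1},\ldots,\ell_{k})\in\N^{k}$, the point $\phi(\boldsymbol{\ell})$ is $\bb$-visible if and only if $\boldsymbol{\ell}$ is $\mathbf c$-visible in the sense of Theorem~\ref{thm:main}. Granting it, Theorem~\ref{thm:main} applies to $\mathbf c$: its entries have $\gcd$ equal to $1$, since a common prime divisor of the $b_{i}$ would divide $\sum_{i}x_{i}b_{i}\tfrac{\alpha}{a_{i}}=\alpha=\lcm(a_{1},\ldots,a_{k})$ (the first equality obtained from $\sum_{i}x_{i}\tfrac{b_{i}}{a_{i}}=1$) while dividing no $a_{i}$ because $\gcd(a_{i},b_{i})=1$, which is absurd. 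Hence the proportion of $\mathbf c$-visible points in $\N^{k}$ is $\tfrac{1}{\zeta(\sum_{i}b_{i})}$, and transporting along $\phi$ completes the proof.

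One implication is immediate. If $\boldsymbol{\ell}$ is not $\mathbf c$-visible, choose $0<s<1$ with $\ell_{i}s^{b_{i}}\in\N$ for all $i$ and put $t:=s^{\alpha}\in(0,1)$; then $\phi(\boldsymbol{\ell})_{i}\,t^{b_{i}/a_{i}}=\ell_{i}^{\,\alpha/a_{i}}s^{\alpha b_{i}/a_{i}}=(\ell_{i}s^{b_{i}})^{\alpha/a_{i}}\in\N$, so $\phi(\boldsymbol{\ell})$ is not $\bb$-visible. For the converse, suppose $\phi(\boldsymbol{\ell})$ is not $\bb$-visible, witnessed by $0<t<1$ with $m_{i}:=\ell_{i}^{\,\alpha/a_{i}}t^{b_{i}/a_{i}}\in\N$ for every $i$. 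The candidate witness for $\boldsymbol{\ell}$ is $s:=t^{1/\alpha}\in(0,1)$: from $(\ell_{i}s^{b_{i}})^{\alpha}=\ell_{i}^{\alpha}t^{b_{i}}=m_{i}^{a_{i}}$ we get $\ell_{i}s^{b_{i}}=m_{i}^{\,a_{i}/\alpha}$, which is a natural number precisely when $m_{i}$ is an $(\alpha/a_{i})$-th power. Thus the converse reduces to the statement that each scaled coordinate $m_{i}$ automatically lies in $\N_{\alpha/a_{i}}$ — and this is the step I expect to be the main obstacle.

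To establish it, observe first that $t^{b_{i}/a_{i}}=m_{i}/\ell_{i}^{\,\alpha/a_{i}}\in\Q$, so the relation $1=\sum_{i}x_{i}\tfrac{b_{i}}{a_{i}}$ forces $t=\prod_{i}\bigl(t^{b_{i}/a_{i}}\bigr)^{x_{i}}\in\Q_{>0}$. Now fix a prime $P$ and compare $P$-adic valuations in $m_{i}^{a_{i}}=\ell_{i}^{\alpha}t^{b_{i}}$: writing $\delta:=v_{P}(t)$ and $\lambda_{i}:=v_{P}(\ell_{i})$ this reads $a_{i}v_{P}(m_{i})=\alpha\lambda_{i}+b_{i}\delta$, so $v_{P}(m_{i})=\tfrac{\alpha}{a_{i}}\lambda_{i}+\tfrac{b_{i}}{a_{i}}\delta$. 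Integrality of $v_{P}(m_{i})$ forces $a_{i}\mid b_{i}\delta$, hence $a_{i}\mid\delta$ because $\gcd(a_{i},b_{i})=1$; as this holds for all $i$ we get $\alpha=\lcm(a_{1},\ldots,a_{k})\mid\delta$, and therefore $\tfrac{\alpha}{a_{i}}\mid v_{P}(m_{i})$. Since $P$ was arbitrary, $m_{i}\in\N_{\alpha/a_{i}}$, which proves the equivalence and hence the theorem. What remains is routine: the verification $\sum_{i}c_{i}=\sum_{i}b_{i}$, and the density bookkeeping for $\phi$, handled by squeezing between nested boxes.
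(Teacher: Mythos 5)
Your proof is correct, but it takes a different route from the paper's. The paper proves the visibility equivalence separately as Theorem~\ref{thm:ifandonlyif3} (via Lemma~\ref{lem:techresult}, Lemma~\ref{lemma:1} and Remark~\ref{remark:5}) and then \emph{redoes} the probabilistic density computation directly on the truncated sets $[N_{\frac{\alpha}{a_i}}]$, passing to the Euler product a second time. You instead (i) re-derive the equivalence ``$\phi(\boldsymbol{\ell})$ is $\bb$-visible iff $\boldsymbol{\ell}$ is $(b_1,\ldots,b_k)$-visible'' from scratch by a $P$-adic valuation argument ($a_i v_P(m_i)=\alpha\lambda_i+b_i\delta$ forces $\alpha\mid\delta$ and hence $\frac{\alpha}{a_i}\mid v_P(m_i)$), which is cleaner and subsumes both the paper's Lemma~\ref{lem:techresult} and the rationality bookkeeping of Lemma~\ref{lemma:1}; and (ii) transport the density from Theorem~\ref{thm:main} along the bijection $\phi$ rather than recomputing it, which requires only the (true, and easily checked from the paper's own Theorem~\ref{thm:main} argument with $\lfloor M_i/p^{b_i}\rfloor$ in place of $\lfloor N/p^{b_i}\rfloor$) observation that the density is insensitive to replacing cubes by boxes with all sides tending to infinity. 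Your verification that $\gcd(b_1,\ldots,b_k)=1$ follows from $\gcd(\bb)=1$ is a worthwhile addition the paper leaves implicit. Two small points to make explicit if you write this up: you are assuming each $\frac{b_i}{a_i}$ is in lowest terms (the paper does too, silently, since Lemma~\ref{lemma:1} needs it), and the ``density bookkeeping'' squeeze deserves a displayed inequality rather than a promissory note, since the truncation by index (first $N$ elements of each $\N_{\frac{\alpha}{a_i}}$) and the paper's truncation by value are a priori different normalizations.
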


\begin{theorem}\label{thm:main3}
Let ${\bf{b}}=(\frac{b_1}{a_1},\ldots,\frac{b_k}{a_k})\in({\mathbb{Q}^{*}})^k$ be such that its negative entries are indexed by the set $J \subseteq [k]$, with $\bf{b}$ satisfying the conditions that $a_1,a_2,\ldots,a_k\in\mathbb{N}$ and  $\gcd(\frac{b_1}{a_1},\frac{b_2}{a_2},\ldots,\frac{b_k}{a_k})=1$. Then the proportion of points in $\mathbb{N}_{\frac{\alpha}{a_1}}\times \cdots \times \mathbb{N}_{\frac{\alpha}{a_n}}$ that are $\bf{b}$-visible is $\frac{1}{\zeta(\sum_{j\in J}|b_j|)}$.
\end{theorem}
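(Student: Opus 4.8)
The plan is to mimic the proofs of Theorems~\ref{thm:main} and~\ref{thm:main2}: first reduce $\bb$-visibility to a prime-power divisibility condition on a coordinatewise root of $\nn$, then carry out the usual Euler-product density computation. The one new feature is that a negative exponent inverts the corresponding divisibility constraint, and in the scaling range appropriate to signed vectors this leaves the coordinates indexed by $[k]\setminus J$ completely unconstrained --- which is precisely why only the negative entries of $\bb$ enter the answer. (If $J=\emptyset$ this is Theorem~\ref{thm:main2}, so assume $J\neq\emptyset$.) Concretely, put $\alpha=\lcm(a_1,\dots,a_k)$, write each $b_i/a_i$ in lowest terms, and parametrize $\mathbb{N}_{\alpha/a_1}\times\cdots\times\mathbb{N}_{\alpha/a_k}$ by $\nn=(m_1^{\alpha/a_1},\dots,m_k^{\alpha/a_k})$ with $(m_1,\dots,m_k)\in\mathbb{N}^k$. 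Substituting $t=s^{\alpha}$ in the defining condition one has $n_i\,t^{\,b_i/a_i}=(m_i s^{\,b_i})^{\alpha/a_i}$, and since $x\mapsto x^{\alpha/a_i}$ is injective on $(0,\infty)$ this lies in $\mathbb{N}_{\alpha/a_i}$ iff $m_i s^{\,b_i}\in\mathbb{N}$. Hence $\nn$ is \emph{not} $\bb$-visible exactly when there is an $s$ in the admissible scaling range --- which for a signed $\bb$ is $s>1$ (reflecting, as in Harris and Omar's treatment of negative exponents \cite{HarrisOmar2017}, that the obstructing point is sought on the far side of $\nn$) --- with $m_i s^{\,b_i}\in\mathbb{N}$ for every $i$.

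The next step is to pin down such an $s$. Since $m_i s^{\,b_i}\in\mathbb{N}$ forces $s^{\,b_i}\in\mathbb{Q}$ for all $i$, and since the lowest-terms normalization combined with the hypothesis $\gcd(b_1/a_1,\dots,b_k/a_k)=1$ forces $\gcd(b_1,\dots,b_k)=1$, Bézout's identity gives $s\in\mathbb{Q}$; write $s=p/q$ in lowest terms. Then $m_i s^{\,b_i}\in\mathbb{N}$ is equivalent to $q^{\,b_i}\mid m_i$ for $i\notin J$ and to $p^{\,|b_j|}\mid m_j$ for $j\in J$. Because $s>1$ we may take $q=1$, i.e.\ $s$ an integer $\ge 2$, which makes every condition indexed by $i\notin J$ vacuous; and conversely every admissible $s$ has $p\ge 2$, hence a prime divisor. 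Putting these together, $\nn$ is not $\bb$-visible if and only if there is a prime $\ell$ with $\ell^{\,|b_j|}\mid m_j$ for all $j\in J$ (for the ``if'' direction take $s=\ell$).

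It remains to compute the density. The displayed condition constrains only the coordinates $(m_j)_{j\in J}$, the remaining coordinates $(m_i)_{i\notin J}$ ranging over $\mathbb{N}$ freely, so the proportion of $\bb$-visible points equals the proportion of tuples $(m_j)_{j\in J}$ for which no prime $\ell$ satisfies $\ell^{\,|b_j|}\mid m_j$ for all $j\in J$. For a fixed prime $\ell$ the event $\{\ell^{\,|b_j|}\mid m_j\text{ for all }j\in J\}$ has density $\prod_{j\in J}\ell^{-|b_j|}=\ell^{-\sum_{j\in J}|b_j|}$, and these events are independent across distinct primes; running the limiting argument exactly as in the proof of Theorem~\ref{thm:main} then gives that the proportion of $\bb$-visible points is $\prod_{\ell\text{ prime}}\bigl(1-\ell^{-\sum_{j\in J}|b_j|}\bigr)=1/\zeta\!\bigl(\sum_{j\in J}|b_j|\bigr)$, with the standard convention that this equals $0$ when $\sum_{j\in J}|b_j|=1$.

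I expect two points to carry the weight of the argument. The first is getting the admissible scaling range right in the definition of $\bb$-visibility for $\bb\in(\mathbb{Q}^{*})^{k}$: it has to be the range that pushes the obstructing lattice point to the far side of $\nn$ along the curve (so that the scaling parameter may be taken to be an integer), since the opposite choice would put $\sum_{i\notin J}b_i$ in place of $\sum_{j\in J}|b_j|$ in the conclusion, contradicting the claim that the proportion depends only on the negative entries. The second is the passage from ``no prime works'' to the Euler product, which is the only genuinely analytic step and must be handled with the same density/sieve estimate as in the proof of Theorem~\ref{thm:main}; the intervening divisibility computations and the lowest-terms normalization are routine.
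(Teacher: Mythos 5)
Your proof is correct and takes essentially the same route as the paper: you re-derive inline the number-theoretic characterization of the paper's Theorem~\ref{thm:ifandonlyif4} (that $\nn$ is $\bb$-invisible precisely when some prime $\ell$ satisfies $\ell^{|b_j|}\mid m_j$ for all $j\in J$, the coordinates outside $J$ being unconstrained) and then run the Euler-product density argument of Theorem~\ref{thm:main} restricted to the coordinates in $J$, which is exactly what the paper does via Theorem~\ref{thm:ifandonlyif4} and Remark~\ref{remark:3}. Your flagged concern about the admissible scaling range is well founded --- Definition~\ref{def:bvisibilityinndimensions3} as written requires $0<t'<t$, yet the paper's proof of Theorem~\ref{thm:ifandonlyif4} scales by $p^{\alpha}>1$ --- and your choice $s>1$ is the one the paper actually uses and the one under which the stated density $1/\zeta(\sum_{j\in J}|b_j|)$ is correct.
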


Note that  
setting $k=2$ and $\bb=(1,b)\in\mathbb{Q}^2$ in Theorems \ref{thm:main2} and \ref{thm:main3} recovers the analogous results in the $b$-visibility setting as considered by Harris and Omar \cite{HarrisOmar2017}, thereby extending them to $n$-dimensions.

We organize this work as follows: Section \ref{sec:background} is separated into two parts. The first considers $\bb\in\N^k$ and provides the definition of $\bb$-visibility along with a number-theoretic characterization of what it means for a lattice point to be $\bb$-visible. The second part extends this definition and provides an analogous number-theoretic result in the case where $\bb\in\Q^k$. Section \ref{sec:mainresult} contains the results on the proportion of lattice points that are $\bb$-visible, work which  establishes our main results (Theorem \ref{thm:main}-\ref{thm:main3}).

\section{Background}\label{sec:background}

Goins et. al. gave the following definition in \cite{GHKM2017}: Fix $b\in\mathbb{N}$, then a lattice point $(r,s)\in\mathbb{N}^2$ is said to be $b$-visible if it lies on the graph of a curve of the form $f(x)=ax^b$ with $a\in \mathbb{Q}$ and there does not exist $(r',s')\in\mathbb{N}^2$ on $f$ with $0<r'<r$. 
This definition of $b$-visibility is dependent on a power function $f(x)$ on which the lattice point $(r,s)$ lies.  However, we start by presenting an equivalent definition which is a parametrized version of the $b$-visibility  and bypasses the need for such a function  $f$. 

\begin{definition}\label{def:par_vis}
Fix $b\in\mathbb{N}$. A lattice point $(r,s)\in\N^2$ is said to be \emph{$b$-visible} if there does not exist a real number $0<t<1$ such that
$(rt,st^b)\in\mathbb{N}^2$.
\end{definition}

Definition \ref{def:par_vis} will be the key in defining $\bb$-visibility to $\N^k$, for $\bb\in\Q^k$. In order to stay consistent with known literature on lattice point visibility, we begin next section by stating a general definition of $\bb$-visibility which depends on lattice points lying on the graph of certain real-valued functions. We then show that this definition of $\bb$-visibility is in fact independent of what function the point lies on. This leads naturally to a  definition of lattice point $\bb$-visibility relying solely on a number-theoretic description of the lattice point $\nn\in\N^k$. We separate the remainder of this section into three cases: $\bb\in\N^k$, $\bb\in\Q_{>0}^k$, and the case where $\bb\in\Q^k$. 

\subsection{On $\bb$-visible lattice points, with $\bb\in\N^k$}
In this section we consider $\bb\in\N^k$ and begin by presenting a definition of $\bb$-visibility, which depends on a lattice point lying on a certain real-valued function.

\begin{definition}\label{def1}
Fix $\bb=(b_1,b_2,\ldots,b_k)\in\mathbb{N}^k$ and define 
\[\F(\bb):=\{\ff:\mathbb{R}\to\mathbb{R}^k\; |\; \ff(t)=(m_1t^{b_1},m_2t^{b_2},\ldots,m_kt^{b_k})\mbox{ where }m_1,m_2,\ldots,m_k\in\N\}.\]
If $\nn=(n_1,n_2,\ldots,n_k)\in\mathbb{N}^k$ and there exists $\ff\in\mathcal{F}(\bb)$ such that
\begin{enumerate}
    \item $\ff(t)=\nn$ for some $t\in \R_{>0}$ and
    \item there does not exist $0<t'<t$ such that $\ff(t')\in\N^k$
\end{enumerate}
then $\nn$ is said to be \emph{$\bb$-visible with respect to $\ff$}. If Condition (1) is satisfied, but (2) is not, then we say that \emph{$\nn$ is  $\bb$-invisible with respect to $\ff$}.
\end{definition}

In order to illustrate Definition \ref{def1} we present the following example.

\begin{example}
The point $(4,16,40,128)$ lies on the graph of $\ff(t)=(4t^{2},16 t^{4},40 t^{3},128 t^{7})$ since $\ff(1)=(4,16,40,128)$, but it is not $(2,4,3,7)$-visible, because $$\ff\left(\frac{1}{2}\right)=\left(4 \left(\frac{1}{2}\right)^2,16 \left(\frac{1}{2}\right)^4,40 \left(\frac{1}{2}\right)^3,128 \left(\frac{1}{2}\right)^7\right)=(1,1,5,1)\in\N^4.$$ However, $(1,1,5,1)$ is $(2,4,3,7)$-visible since it lies on the curve $\g(t)=(t^{2},t^{4},5 t^{3}, t^{7})$ as $\g(1)=(1,1,5,1)$ and there does not exist $0<t'<1$ such that $\g(t')\in\N^4$.
\end{example}

We remark that the classical notion of visibility in $\N^k$ is the particular case of taking $\bb = (1,\dots,1)\in\N^k$ in Definition \ref{def1}, while $b$-visibility corresponds to taking $\bb = (1,b)$.
For a fixed $\bb\in\N^k$ we now show that the $\bb$-visibility of the lattice point $\nn\in\N^k$ is independent of the choice of function $\ff$ satisfying Definition \ref{def1}. This is the content of the following result.

\begin{lemma}\label{indep_visibility}
Fix $\bb=(b_1,\ldots,b_k),\;\nn=(n_1,\ldots,n_k)\in\N^k$ and suppose that $\nn$ is $\bb$-visible with respect to $\ff\in\F(\bb)$.
If $\g\in\F(\bb)$ and $\g(r)=\nn$ for some $r\in\R_{>0}$, then $\nn$ is $\bb$-visible with respect to $\g$. 
\end{lemma}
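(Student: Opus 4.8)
The idea is to translate $\bb$-visibility into a purely number-theoretic statement, so that the choice of parametrizing function $\ff$ disappears. Suppose $\nn = (n_1,\dots,n_k)$ is $\bb$-visible with respect to $\ff(t) = (m_1 t^{b_1},\dots,m_k t^{b_k})$, with $\ff(t_0) = \nn$ for some $t_0 \in \R_{>0}$. Since $m_i t_0^{b_i} = n_i$ for each $i$, and the $n_i, m_i$ are positive integers, $t_0$ is an algebraic number; moreover $t_0^{b_i} = n_i/m_i \in \Q_{>0}$ for each $i$. Because $\gcd(b_1,\dots,b_k) = 1$ — or at least because we can take an integer combination $\sum c_i b_i = 1$ with $c_i \in \Z$ — we get $t_0 = \prod_i (t_0^{b_i})^{c_i} = \prod_i (n_i/m_i)^{c_i} \in \Q_{>0}$. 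So $t_0$ is in fact a positive rational, and similarly the parameter $r$ with $\g(r) = \nn$ (for $\g(t) = (m_1' t^{b_1},\dots,m_k' t^{b_k})$) is a positive rational.

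Next I would reformulate condition (2). A point $\ff(t') \in \N^k$ with $0 < t' < t_0$ exists if and only if there is a rational $0 < t' < t_0$ with $m_i (t')^{b_i} \in \N$ for all $i$; writing $t' = t_0 \cdot s$ with $0 < s < 1$ (so $s \in \Q_{>0}$), this says $n_i s^{b_i} \in \N$ for all $i$. Crucially this last condition $n_i s^{b_i} \in \N$ for all $i$ depends only on $\nn$ and $\bb$, not on $\ff$. Therefore: $\nn$ is $\bb$-visible with respect to $\ff$ if and only if there is no rational $0 < s < 1$ with $n_i s^{b_i} \in \N$ for every $i \in [k]$. The right-hand side makes no reference to $\ff$, so the same equivalence holds for $\g$, and the lemma follows.

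The one point needing care is showing that any parameter producing $\nn$ from a function in $\F(\bb)$ is forced to be rational — equivalently, that $\F(\bb)$ cannot reach $\nn$ through an irrational $t$. This is exactly where the hypothesis $\gcd(\bb) = 1$ (in the integer-combination sense) enters: writing $1 = \sum_i c_i b_i$ expresses $t_0$ as a product of rational powers $n_i/m_i$, hence rational. (If $\gcd(\bb) = d > 1$ one could only conclude $t_0^d \in \Q$, and genuinely irrational parameters could occur, e.g. $\bb = (2,2)$ reaching $(2,2)$ via $t_0 = \sqrt{2}$, which is why the standing hypothesis matters.) I expect this rationality argument to be the main — indeed essentially the only — obstacle; once it is in place, rescaling $t' = t_0 s$ and reading off condition (2) as a condition on $\nn$ and $\bb$ alone is routine, and the independence of $\g$ is immediate. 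I would also note in passing that this reformulation is precisely the number-theoretic characterization the paper says it will extract next, so the proof does double duty.
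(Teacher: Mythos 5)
Your core idea---rescale the parameter by $t'=t_0 s$ so that the integrality condition becomes $n_i s^{b_i}\in\N$ for all $i$, a condition mentioning only $\nn$ and $\bb$---is exactly the right one, and with real $s$ it is essentially the paper's proof. But the detour through rationality creates a genuine gap: the lemma is stated for arbitrary $\bb\in\N^k$, with no hypothesis that $\gcd(b_1,\dots,b_k)=1$, yet your argument needs $\sum_i c_i b_i=1$ to force $t_0\in\Q$, and your reformulation quantifies over \emph{rational} $s$ only. As you yourself observe, when $\gcd(\bb)=d>1$ the parameter can be genuinely irrational; the paper's own example is $\nn=(2,4)$ with $\bb=(2,4)$, where the only lattice point reached is $(1,1)$ at $t=1/\sqrt{2}$. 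Under your ``no rational $s$'' criterion that point would be declared visible, whereas by Definition \ref{def:simpleone} it is not. You cannot treat $\gcd(\bb)=1$ as a standing hypothesis here: the paper imposes that restriction only \emph{after} this lemma, and the reduction justifying it relies on the lemma holding for general $\bb$.

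The repair is simply to delete the rationality step; the rescaling works verbatim with real parameters and needs no arithmetic input about $t$. If $\ff(s)=\nn=\g(r)$ with $\ff(t)=(m_1t^{b_1},\dots,m_kt^{b_k})$ and $\g(t)=(u_1t^{b_1},\dots,u_kt^{b_k})$, then $u_i=m_i(s/r)^{b_i}$ for each $i$, so any $0<t<r$ with $\g(t)\in\N^k$ yields $\ff(st/r)=\g(t)\in\N^k$ with $0<st/r<s$, contradicting visibility with respect to $\ff$. Your $\ff$-free criterion is then correct provided $s$ ranges over the reals; the rational version, and hence the number-theoretic characterization you anticipate, only becomes valid once $\gcd(\bb)=1$ is assumed, which is where the paper's Theorem \ref{thm:ifandonlyif1} lives.
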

\begin{proof}
By hypothesis $\ff(s)=(m_1s^{b_1},\ldots,m_ks^{b_k})=\nn$ for some $s\in\R_{>0}$ and $m_i\in\N$ for all $1\leq i\leq k$. Likewise, $\g(r)=(u_1r^{b_1},\dots,u_kr^{b_k})=\nn$ for some $r\in\R_{>0}$ and $u_i\in\N$ for all $1\leq i\leq k$. Thus
$(m_1s^{b_1},\dots,m_ks^{b_k})=(u_1r^{b_1},\dots,u_kr^{b_k})$, which implies
$m_i(s/r)^{b_i}=u_i$ for each $1\leq i\leq k$. If there exists $0<t<r$ such that $\g(t)=(v_1,\dots,v_k)\in\mathbb N^k$ then for each $1\leq i\leq k$ we have
$$v_i=u_it^{b_i}=m_i(s/r)^{b_i}t^{b_i}=m_i(st/r)^{b_i}.$$
Since $t<r$ then $st/r<s$, and thus $\mathbf f(st/r)\in\mathbb N^k$, which contradicts the fact that $\mathbf n$ is $\bb$-visible with respect to $\mathbf f$. Hence, such a $t$ can not exists and we conclude that $\mathbf n$ is $\bb$-visible with respect to $\mathbf g$.
\end{proof}

Note that for any $\bb=(b_1,b_2,\ldots,b_k)$ and $\nn=(n_1,n_2,\ldots,n_k)$ in $\N^k$, $\nn$ lies on the curve $\ff(t)=(n_1t^{b_1}, n_2t^{b_2},\ldots,n_kt^{b_k})\in\F(\bb)$ since $\ff(1)=\nn$. Thus, by Lemma \ref{indep_visibility} the $\bb$-visibility of $\nn$ can be reduced to determining the existence of $0<t<1$ for which $\ff(t)\in\mathbb N^k$.
Hence, from now on we refer to the $\bb$-visibility of a lattice point $\nn$ with no reference to a specific function and we use the following definition of $\bb$-visibility, whenever  $\bb\in\N^k$.

\begin{definition}\label{def:simpleone}
Fix $\bb=(b_1,b_2,\ldots,b_k)\in\N^k$. Then the lattice point $\nn=(n_1,n_2,\ldots,n_k)\in\N^k$ is said to be $\bb$-visible if there does not exist a positive real number $0<t<1$ such that $(n_1t^{b_1},n_2t^{b_2},\ldots,n_kt^{b_k})\in\N^k$.
\end{definition}

We would like to have a number theoretic characterization of the points that are $\bb$-visible according to the above definition; and one would expect it to be almost identical to the characterization given by Goins et. al. However, a new obstacle arises in this context:

For the former notions of visibility, if a point ${\bf{n}}=(n_1,n_2)\in \N^2$ was not $b$-visible then there had to exist a rational number $t<1$ such that $(tn_1,t^bn_2)\in \N^2$ and $t^bn_1=\frac{n_1}{n_2^b}(tn_2)^b$. This does not need to be the case now. Take for instance the point $(2,4)\in \N^2$. The only point with integer coordinates $(2t^2,4t^4)$ for $0<t<1$ is $(1,1)$ which is obtained by making $t=\frac{1}{\sqrt{2}}$. Hence, in accordance with our definition, $(2,4)$ is not $(2,4)$-visible, but this can only be noticed if $t$ is allowed to be an irrational number. 

This can be avoided if $\bf{b}$ satisfies $\gcd(\bb)=1$ where $\gcd(\bb)$ is the greatest common divisor of the entries of $\bb$. Indeed, in that case there must exist integers satisfying $m_1b_1+m_2b_2+\cdots +m_kb_k=1$. So, by hypothesis one has that $(n_1t^{b_1},n_2t^{b_2},\ldots,n_kt^{b_k})\in\N^k$, and then:

$$(n_1t^{b_1})^{m_1}\ldots (n_kt^{b_k})^{m_k}=(n_1^{m_1})\ldots (n_k^{m_k})t^{m_1b_1+m_2b_2+\cdots +m_kb_k}=(n_1^{m_1})\ldots (n_k^{m_k})t\in \Q.$$

Thus $t\in \Q$, and we only need to worry about the case $\gcd(\bb)>1.$

\begin{lemma}
A point $\bf{n}\in \N^k$ is $\bb$-visible if and only if it is $\left( \frac{1}{\gcd(\bb)}\bb\right)$-visible.
\end{lemma}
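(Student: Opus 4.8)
The plan is to prove both implications separately, leveraging the observation that multiplying the coordinates of a point $(n_1t^{b_1},\ldots,n_kt^{b_k})$ by suitable integer powers isolates powers of $t$. Write $d=\gcd(\bb)$ and $\bb'=\frac{1}{d}\bb=(b_1/d,\ldots,b_k/d)$, so $\gcd(\bb')=1$.

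For the backward direction, suppose $\nn$ is not $\bb$-visible; I want to show it is not $\bb'$-visible. By Definition \ref{def:simpleone} there exists $0<t<1$ with $(n_1t^{b_1},\ldots,n_kt^{b_k})\in\N^k$. Set $s=t^{1/d}$; then $0<s<1$ and $n_it^{b_i}=n_i s^{db_i/d\cdot d}$... more cleanly, $n_i s^{b_i d}=n_i t^{b_i}$, so I should instead set $s=t^d$. Wait — I need the exponents to shrink, so the right substitution is $s=t^d$: then $n_i s^{b_i'} = n_i t^{d b_i'} = n_i t^{b_i}\in\N$, and $0<s<1$. Hence $\nn$ is not $\bb'$-visible. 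This direction requires no hypothesis on $\gcd$ and is immediate.

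For the forward direction, suppose $\nn$ is not $\bb'$-visible: there exists $0<t<1$ with $(n_1t^{b_1'},\ldots,n_kt^{b_k'})\in\N^k$. Since $\gcd(b_1',\ldots,b_k')=1$, the paragraph preceding the lemma shows $t\in\Q$ — indeed choosing integers $m_i$ with $\sum m_i b_i'=1$ gives $\prod (n_i t^{b_i'})^{m_i}=\left(\prod n_i^{m_i}\right)t\in\Q$. Now set $s=t^{1/d}$. A priori $s$ need not be rational, but $s$ is a positive real with $0<s<1$, and $n_i s^{b_i}=n_i t^{b_i/d}=n_i t^{b_i'}\in\N$. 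Therefore $(n_1s^{b_1},\ldots,n_ks^{b_k})\in\N^k$ with $0<s<1$, so $\nn$ is not $\bb$-visible. Thus again no difficulty — the definition of $\bb$-visibility explicitly allows real (not just rational) parameters, which is exactly what makes taking a $d$-th root harmless.

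The main subtlety to be careful about is that one might be tempted to insist the witnessing $t$ be rational, in which case the forward direction would genuinely need the $\gcd$ argument (to conclude $t\in\Q$ before taking roots, or more precisely to handle the gap illustrated by the $(2,4)$ example). But since Definition \ref{def:simpleone} quantifies over all real $t\in(0,1)$, both directions go through purely formally by the substitutions $s=t^d$ and $s=t^{1/d}$, which set up a bijection between real witnesses for $\bb$-invisibility and real witnesses for $\bb'$-invisibility. I would still remark on why the $\gcd(\bb)=1$ normalization is the ``right'' one — namely that for $\bb'$ the real witness is automatically rational, recovering a clean number-theoretic characterization — but that remark is motivational rather than part of the proof. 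Hence I expect no real obstacle; the proof is two lines of substitution once the correct direction of the root is pinned down.
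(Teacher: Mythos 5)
Your proof is correct and follows essentially the same route as the paper's: both directions reduce to the substitutions $s=t^{\gcd(\bb)}$ and $s=t^{1/\gcd(\bb)}$ applied to a real witness $t\in(0,1)$. Your observation that rationality of $t$ is never actually needed (the paper's proof invokes $t^{\gcd(\bb)}\in\Q$ en route, but Definition \ref{def:simpleone} quantifies over all reals) is a minor and accurate streamlining rather than a different argument.
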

\begin{proof}
Suppose that $\bf{n}$ is not $\bb$-visible. Then there exists $0<t<1$ with $(n_1t^{b_1},n_2t^{b_2},\ldots,n_kt^{b_k})\in\N^k$. As a result $t^{\gcd(\bb)}\in\Q$ by an argument analogous to the one above. But then:

$$(n_1(t^{\gcd(\bb)})^{\frac{b_1}{\gcd(\bb)}},n_2(t^{\gcd(\bb)})^{\frac{b_2}{\gcd(\bb)}},\ldots,n_k(t^{\gcd(\bb)})^{\frac{b_k}{\gcd(\bb)}})\in\N^k$$

Where $t^{\gcd(\bb)}<1$ because $t<1.$ So $\bf{n}$ is not $\left( \frac{1}{\gcd(\bb)}\bb\right)$-visible.

For the converse direction, if $(n_1(t)^{\frac{b_1}{\gcd(\bb)}},n_2(t)^{\frac{b_2}{\gcd(\bb)}},\ldots,n_k(t)^{\frac{b_k}{\gcd(\bb)}})\in\N^k$, for $0<t<1$, then taking $t'=t^{\frac{1}{\gcd(\bb)}}$ evinces that $\bf{n}$ is not $\bb$ visible either. 
\end{proof}

In light of the foregoing, we will only study $\bb$-visibility for $\gcd(\bb)=1$.

\begin{theorem}\label{thm:ifandonlyif1}
Fix $\bb=(b_1,\ldots,b_k)\in \N^k$ satisfying $\gcd(\bb)=1$. Then the lattice point $\nn=(n_1,\ldots,n_k)\in\N^k$ is $\bb$-visible if and only if there does not exist a prime $p$, such that $p^{b_i}|n_i$ for all $i=1,2,\ldots, k$.
\end{theorem}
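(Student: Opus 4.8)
The plan is to prove both directions by translating the condition "$(n_1t^{b_1},\ldots,n_kt^{b_k})\in\N^k$ for some $0<t<1$" into divisibility statements, using the fact established just before the statement that $\gcd(\bb)=1$ forces any such $t$ to be rational. Write $t=c/d$ in lowest terms with $1\le c<d$ and $\gcd(c,d)=1$; then $n_it^{b_i}\in\N$ means $d^{b_i}\mid n_ic^{b_i}$, and since $\gcd(c,d)=1$ this is equivalent to $d^{b_i}\mid n_i$ for every $i$. So $\nn$ is $\bb$-invisible iff there exists an integer $d>1$ (namely the denominator of $t$) with $d^{b_i}\mid n_i$ for all $i$.

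For the ($\Leftarrow$ contrapositive / reverse) direction, if there is a prime $p$ with $p^{b_i}\mid n_i$ for all $i$, take $d=p$ and $c=1$, i.e. $t=1/p$; then $n_i/p^{b_i}\in\N$ for all $i$, so $(n_1t^{b_1},\ldots,n_kt^{b_k})\in\N^k$ with $0<t=1/p<1$, witnessing that $\nn$ is not $\bb$-visible. For the ($\Rightarrow$ contrapositive) direction, suppose $\nn$ is not $\bb$-visible; by the rationality fact there is a rational $t=c/d\in(0,1)$ in lowest terms with $d>1$ and $d^{b_i}\mid n_i$ for all $i$. Let $p$ be any prime dividing $d$. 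Since $p\mid d$ we have $p^{b_i}\mid d^{b_i}\mid n_i$ for every $i$, producing the desired prime. This gives the equivalence.

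The one point that needs care — and the main (mild) obstacle — is making fully rigorous the reduction from a real $t$ to a rational $t=c/d$ in lowest terms, and then the step $d^{b_i}\mid n_ic^{b_i}\iff d^{b_i}\mid n_i$. The first part is exactly the computation displayed right before the theorem: using Bézout coefficients $m_1b_1+\cdots+m_kb_k=1$ (possibly negative), the product $\prod_i (n_it^{b_i})^{m_i}$ equals $\bigl(\prod_i n_i^{m_i}\bigr)t$, a ratio of integers, hence $t\in\Q$; I would cite this verbatim rather than repeat it. For the second part, $\gcd(c,d)=1$ implies $\gcd(c^{b_i},d^{b_i})=1$, so $d^{b_i}\mid n_ic^{b_i}$ forces $d^{b_i}\mid n_i$ (and conversely trivially); this is standard but worth one line.

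A cleaner packaging, which I would actually use to write the final proof, is to first record the auxiliary equivalence as a stepping stone: \emph{$\nn$ is $\bb$-invisible $\iff$ there exists an integer $d\ge 2$ with $d^{b_i}\mid n_i$ for all $i$.} The forward implication of the theorem then follows by taking a prime factor of such a $d$, and the backward implication by taking $d=p$. This isolates all the number theory (rationality of $t$, coprimality cancellation) into the stepping-stone claim and leaves the theorem itself a two-line consequence; it is also the form most convenient for the density computations in Section~\ref{sec:mainresult}, since "no prime $p$ has $p^{b_i}\mid n_i$ for all $i$" is an inclusion–exclusion–friendly condition over primes.
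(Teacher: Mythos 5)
Your proof is correct and follows essentially the same route as the paper's: use the Bézout/$\gcd(\bb)=1$ argument to force $t\in\Q$, write $t$ in lowest terms, deduce that the denominator's $b_i$-th powers divide the $n_i$, and pass to a prime factor of the denominator (and conversely take $t=1/p$). Your only additions — spelling out the cancellation $d^{b_i}\mid n_ic^{b_i}\Rightarrow d^{b_i}\mid n_i$ and packaging the argument via the auxiliary ``some integer $d\ge 2$'' equivalence — are cosmetic refinements of the paper's argument, not a different approach.
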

\begin{proof}
Suppose that ${\bf{n}}$ is ${\bf{b}}$-visible. If there exists a prime $p$ satisfying $p^{b_i}|n_i$ for all $i=1,\ldots,k$, then the point $\nn'=({n_1}(\frac{1}{p})^{b_1}, {n_2}(\frac{1}{p})^{b_2},\ldots,n_k(\frac{1}{p})^{b_k} )$ is an integer lattice point. Since $\frac{1}{p}<1$, it follows that $\nn$ is not $\bb$-visible, which is a contradiction.

On the other hand let us suppose that ${\bf{n}}$ is not ${\bf{b}}$-visible. Then there exists  $0<t<1$ such that $({n_1}t^{b_1}, {n_2} t^{b_2},\ldots,n_k t^{b_k})\in \mathbb{N}^k$. Since $\gcd(\bb)=1$ and $t\in \Q$, we can take $a$ and $c$ with $\gcd(a,c)=1$, and let $t=\frac{a}{c}$. Then $c^{b_i}|n_i$ for all $i=1,\ldots,k$. Thus any prime factor $p$ of $c$ yields the desired result.
\end{proof}

\begin{definition}\label{def:brelativelyprime1}
Let $\bb,\nn\in \N^k$. We  say that $\nn$ is \emph{$\bb$-relatively prime} if there does not exist a prime $p$ such that $p^{b_i}|n_i$ for all $i=1,\ldots,k$.
\end{definition}

In light of Definition \ref{def:brelativelyprime1}, we can restate Theorem \ref{thm:ifandonlyif1} as follows.

\begin{corollary}\label{cor:numbertheoretic}
Fix $\bb\in \N^k$ satisfying $\gcd(\bb)=1$. Then the lattice point $\bb\in \N^k$ is $\bf{b}$-visible if and only if $\nn$ is $\bb$-relatively prime.
\end{corollary}

Note that Theorem \ref{thm:ifandonlyif1} and Corollary \ref{cor:numbertheoretic} present number-theoretic characterizations of $\bb$-visible lattice points whenever $\bb\in\N^k$. 

\subsection{On $\bb$-visible lattice points, with $\bb\in\Q^k$} 
We now extend the definition of $\bb$-visibility to  the case where $\bb\in\Q^k$, and present number-theoretic results analogous to Theorem \ref{thm:ifandonlyif1} and Corollary \ref{cor:numbertheoretic}.
These results generalize the $b$-visibility proportionality results for $b\in\mathbb{Q}$ obtained by Harris and Omar in \cite{HarrisOmar2017} to lattice points in $\N^k$. 

In what follows, we begin by considering $\bb\in\mathbb{Q}_{>0}^k$, i.e. $\bb$ with all positive rational entries, and later we consider the case where $\bb$ has some negative entries.

\begin{definition}
\label{def:bvisibilityinndimensions2}
Fix $\bb=(\frac{b_1}{a_1},\frac{b_2}{a_2},\ldots,\frac{b_k}{a_k})\in\mathbb{Q}_{>0}^k$ and suppose that $\nn=(n_1,n_2,\ldots,n_k)\in\N^k$ lies on the curve \[\ff(t)=(m_{1}t^{\frac{b_1}{a_1}},m_{2}t^{\frac{b_2}{a_2}},\cdots,m_{k}t^{\frac{b_k}{a_k}})\] for some ${\bf{m}}=(m_1,m_2,\ldots,m_k) \in\N^k$. Then the point $\nn$ is said to be \emph{$\bb$-visible (with respect to $\ff$)} if there does not exist another point in $\N^k$ on the graph of $\ff(t)$ lying between the origin and~$\nn$.
If $\nn$ is not $\bb$-visible, then we say $\nn$ is \emph{$\bb$-invisible (with respect to $\ff$)}.
\end{definition}

As before, we note that for any $\bb=(\frac{b_1}{a_1},\frac{b_2}{a_2},\ldots,\frac{b_k}{a_k})\in\mathbb{Q}_{>0}^k$ the lattice point  \[\nn=(n_1,n_2,\ldots,n_k)\in\N^k\]
lies on the curve $\ff(t)=(n_1t^{\frac{b_1}{a_1}}, n_2t^{\frac{b_2}{a_2}},\ldots,n_kt^{\frac{b_k}{a_k}})$ since $\ff(1)=\nn$. 
Thus, for $\bb\in\Q_{>0}^k$, the $\bb$-visibility of $\nn$ can again be reduced to determining the existence of a real number  $0<t<1$ for which $\ff(t)\in\mathbb N^k$. 
Moreover, note that the statement and proof of Lemma \ref{indep_visibility} hold when $\bb\in\Q_{>0}^k$. 
Thus, the  $\bb$-visibility of $\nn\in\N^k$ is independent of the function~$\ff$  when $\bb\in\Q_{>0}^k$.

For $a_1,a_2,\ldots,a_k\in\mathbb{N}$ we let $\alpha:=\lcm(a_1,\ldots,a_k)$ and for each $1\leq i\leq k$ we let $\mathbb{N}_{\frac{\alpha}{a_i}}$ denote the set of integers of the form $\ell^{\frac{\alpha}{a_i}}$ with $\ell\in\N$. 
We now state and prove the following technical result.

\begin{lemma}\label{lem:techresult}
Let $\bb=(\frac{b_1}{a_1},\ldots,\frac{b_k}{a_k})\in \mathbb{Q}_{>0}^k$. 
Then the $\bb$-visibility of $\nn\in\N^k$ with respect to the family of functions $\ff(t)=(m_1t^{\frac{b_1}{a_1}},\ldots,m_kt^{\frac{b_k}{a_k}})$ with $(m_1,\ldots,m_k)\in \N^k$ is equivalent to the $\bb$-visibility of $\nn\in\N^k$ with respect to the family of functions $\g(t)=(m_1t^{\frac{b_1}{a_1}},\ldots,m_kt^{\frac{b_k}{a_k}})$ with $(m_1,\ldots,m_k)\in \mathbb{N}_{\frac{\alpha}{a_1}}\times \ldots \times \mathbb{N}_{\frac{\alpha}{a_k}}.$
\end{lemma}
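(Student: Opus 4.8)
The plan is to show that enlarging the admissible family of curves from all $\ff(t)=(m_1t^{b_1/a_1},\ldots,m_kt^{b_k/a_k})$ with $\mathbf m\in\N^k$ to the smaller subfamily with $\mathbf m\in\N_{\alpha/a_1}\times\cdots\times\N_{\alpha/a_k}$ does not change whether a given $\nn\in\N^k$ is $\bb$-visible. Since the $\g$-family is contained in the $\ff$-family, one direction is immediate: if $\nn$ is $\bb$-visible with respect to the larger $\ff$-family, then in particular there is no $\g$ in the smaller family witnessing invisibility, so $\nn$ is $\bb$-visible with respect to the $\g$-family. The content is the contrapositive of the other direction: if $\nn$ is $\bb$-invisible via \emph{some} $\ff$ in the larger family, then it is already $\bb$-invisible via some $\g$ in the smaller family.

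So suppose $\nn$ is $\bb$-invisible with respect to $\ff(t)=(m_1t^{b_1/a_1},\ldots,m_kt^{b_k/a_k})$ with $\mathbf m\in\N^k$. By the reduction noted just before the lemma (Lemma~\ref{indep_visibility} applied in the $\Q_{>0}^k$ setting), we may assume without loss of generality that $\ff(1)=\nn$, i.e. $m_i=n_i$ for all $i$; invisibility then means there is a real $0<t<1$ with $\ff(t)=(n_1t^{b_1/a_1},\ldots,n_kt^{b_k/a_k})\in\N^k$. The key step is a change of parameter: set $s:=t^{1/\alpha}$, so $0<s<1$ and $t^{b_i/a_i}=s^{\alpha b_i/a_i}$, where each exponent $\alpha b_i/a_i$ is now a \emph{positive integer} (because $a_i\mid\alpha$). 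Thus the point $\nn$ lies on the integer-exponent curve $\mathbf h(u)=(n_1u^{\alpha b_1/a_1},\ldots,n_ku^{\alpha b_k/a_k})$ via $\mathbf h(1)=\nn$, and $\mathbf h(s)\in\N^k$ with $0<s<1$, so $\nn$ is invisible for this integer-exponent curve as well. Now I want to convert this back into a curve of the required form $\g(t)=(m_1t^{b_1/a_1},\ldots,m_kt^{b_k/a_k})$ with $m_i\in\N_{\alpha/a_i}$. The natural choice is $m_i:=n_i^{\alpha/a_i}$ wait — more carefully, one should track the witness: writing $\mathbf h(s)=(v_1,\ldots,v_k)\in\N^k$ and $r:=s^{\alpha}=t\in(0,1)$, we want $\g$ with $\g(1)=$ (something in the small lattice) whose intermediate lattice point is exactly $(v_1,\ldots,v_k)$; concretely one checks that $\g(t):=(v_1 t^{b_1/a_1},\ldots,v_k t^{b_k/a_k})$ does \emph{not} obviously have coefficients in $\N_{\alpha/a_i}$, so instead I would feed the invisibility witness through a rescaled curve $\g(t)=(n_i^{\alpha/a_i}\,\xi\,t^{b_i/a_i})_i$ chosen so that $\g$ passes through a small-lattice point at parameter $1$ and through an integer point strictly before it — the cleanest route is: from $t^{b_i/a_i}=q_i/c_i\in\Q_{>0}$ with the common structure forced by $\gcd$-type arguments as in Theorem~\ref{thm:ifandonlyif1}, deduce $t=(\text{rational})$ raised to a suitable power so that $n_i t^{b_i/a_i}\in\N$ forces $n_i$ itself to sit in $\N_{\alpha/a_i}$ up to a common prime-power factor, and then exhibit $\g$ with coefficients $n_i/p^{b_i}$ or $n_i/p^{\alpha b_i/a_i}$ in $\N_{\alpha/a_i}$.

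The main obstacle is precisely this last bookkeeping: ensuring that the coefficients of the witnessing curve $\g$ genuinely lie in $\N_{\alpha/a_i}$ and not merely in $\N$. I expect this to hinge on a prime-by-prime analysis: if $\ff(t)\in\N^k$ for $0<t<1$ with $\ff(1)=\nn$, then (as in the proof of Theorem~\ref{thm:ifandonlyif1}) $t$ must be rational, say $t=a/c$ in lowest terms, and the condition $n_i(a/c)^{b_i/a_i}\in\N$ forces divisibility relations among the prime exponents in $n_i$ that, combined with $a_i\mid\alpha$, let one factor $n_i=n_i'^{\,\alpha/a_i}\cdot(\text{unit-like correction})$; one then picks $\g(t)=\big((n_i')^{\alpha/a_i}t^{b_i/a_i}\big)_{i}$ — which has coefficients in $\N_{\alpha/a_i}$ by construction — and verifies $\g(1)$ is the required small-lattice point and that $\g$ passes through a lattice point strictly between the origin and $\g(1)$, e.g. at $t=1/p$ for a suitable prime $p$ dividing $c$. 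I would write out this prime-exponent argument carefully and leave the entirely routine algebraic verifications (that the exponents $\alpha b_i/a_i$ are integers, that $s^\alpha=t$, etc.) to the reader.
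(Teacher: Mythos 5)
There is a genuine gap: the hard direction (invisible with respect to the large family $\Rightarrow$ invisible with respect to the restricted family) is never actually closed. After correctly reducing to $\ff(t)=(n_1t^{b_1/a_1},\ldots,n_kt^{b_k/a_k})$ with a witness $0<t<1$, you identify the crux --- that the coefficients of the witnessing curve must lie in $\N_{\frac{\alpha}{a_1}}\times\cdots\times\N_{\frac{\alpha}{a_k}}$ --- and then defer it to an unexecuted ``prime-by-prime analysis,'' with several candidate constructions proposed and abandoned. As written this is a plan, not a proof. Worse, the plan cannot work in the generality you are attempting: for an arbitrary $\nn\in\N^k$, nothing about the existence of an intermediate lattice point on $\ff$ forces the coordinates $n_i$ to be perfect $\frac{\alpha}{a_i}$-th powers (consider $k=1$, $b_1/a_1=1/2$, $\alpha=2$, $n_1=12$: the point is invisible via $t=1/4$, yet $12\notin\N_2$), so no amount of prime-exponent bookkeeping will place the coefficients in $\N_{\frac{\alpha}{a_i}}$ from the invisibility hypothesis alone.

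What is missing is the observation that the lemma is meant to be (and in the paper's own proof is) applied to $\nn\in\N_{\frac{\alpha}{a_1}}\times\cdots\times\N_{\frac{\alpha}{a_k}}$; the sentence following the lemma, which says it lets one restrict attention to such points, confirms this scope. Under that hypothesis the hard direction is a two-line reparametrization with no number theory: if $\ff(t')=\nn$ and $\ff(t'')$ is a lattice point with $0<t''<t'$, then $\ff(t'')=\g(t''/t')$ where $\g(t)=(n_1t^{b_1/a_1},\ldots,n_kt^{b_k/a_k})$, and $\g$ belongs to the second family precisely because each $n_i\in\N_{\frac{\alpha}{a_i}}$ by assumption on $\nn$. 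You had this curve in hand after your WLOG step; the missing ingredient was not a factorization argument but the hypothesis on $\nn$ that makes its membership in the second family immediate. (Your easy direction, by containment of the families together with Lemma~\ref{indep_visibility}, is fine and matches the paper.)
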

\begin{proof}
Let $\nn\in \mathbb{N}_{\frac{\alpha}{a_1}}\times \ldots \times \mathbb{N}_{\frac{\alpha}{a_k}}$, and suppose that $\nn$ is not $\bb$-visible with respect to $\ff(t)=(m_1t^{\frac{b_1}{a_1}},\ldots,m_kt^{\frac{b_k}{a_k}})$ with $(m_1,\ldots,m_k)\in \N^k$. Then there exist $t',t''\in \mathbb{R}$ with $t''<t'$, and $\ff(t'')\in \mathbb{N}_{\frac{\alpha}{a_1}}\times \ldots \times \mathbb{N}_{\frac{\alpha}{a_k}}$, $\ff(t')=\nn.$ But then 

$$\left(m_1(t')^{\frac{b_1}{a_1}}\left(\frac{t''}{t'}\right)^{\frac{b_1}{a_1}},\ldots,m_k(t')^{\frac{b_k}{a_k}}\left(\frac{t''}{t'}\right)^{\frac{b_k}{a_k}}\right)=\left(n_1\left(\frac{t''}{t'}\right)^{\frac{b_1}{a_1}},\ldots,n_k\left(\frac{t''}{t'}\right)^{\frac{b_k}{a_k}}\right).$$

And recall that $\nn\in \mathbb{N}_{\frac{\alpha}{a_1}}\times \ldots \times \mathbb{N}_{\frac{\alpha}{a_k}}$, so the right side of the last equation is equal to $\g(\frac{t''}{t'})$ for a function $\g(t)$ belonging to the second family of functions, and because $t''<t'$ we have $\frac{t''}{t'}<1$, so it is also not visible with respect to $\g(t)$. 

The other direction is clear because the second family of functions $\g(t)$ is included in the family of functions $\ff(t)$.
\end{proof}

Note that Lemma \ref{lem:techresult} allows us to restrict our $\bb$-visibility study to lattice points~ ${\bf{n}}\in \mathbb{N}_{\frac{\alpha}{a_1}}\times \cdots \times \mathbb{N}_{\frac{\alpha}{a_k}}$.
In order to present an analogous result to Theorem \ref{thm:ifandonlyif1} we need the following technical result.

\begin{lemma}\label{lemma:1}
Let $b,c\in\N$ with $gcd(b,c)=1$. If $t\in \mathbb{Q}$ and $t^{\frac{b}{c}} \in \mathbb{Q}$, then $t^{\frac{1}{c}}\in \mathbb{Q}$.
\end{lemma}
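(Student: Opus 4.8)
The plan is to write $t$ in lowest terms and track prime factorizations through the relation $t^{b/c}\in\Q$. First I would dispose of trivial cases: if $t=0$ or $t=1$ the conclusion is immediate, and replacing $t$ by $1/t$ if necessary (since $\gcd(b,c)=1$ forces $b$ odd when $c$ is, etc., but more simply since $t^{1/c}\in\Q \iff (1/t)^{1/c}\in\Q$ for $t>0$, and a sign can be absorbed because $c$ is odd whenever a real $c$-th root is in play) I may assume $t=u/v$ with $u,v\in\N$, $\gcd(u,v)=1$. The hypothesis says $(u/v)^{b/c}=p/q$ for some coprime $p,q\in\N$, i.e. $u^b q^c = v^b p^c$. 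Since $\gcd(u,v)=1$ we get $u^b\mid p^c$ and $p^c\mid u^b$, hence $u^b=p^c$; similarly $v^b=q^c$.

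Now I would argue prime-by-prime. Fix a prime $\pi$ and let $e=v_\pi(u)$ (the exponent of $\pi$ in $u$). From $u^b=p^c$ we get $be = c\cdot v_\pi(p)$, so $c\mid be$; since $\gcd(b,c)=1$ this forces $c\mid e$. As this holds for every prime $\pi$, every exponent in the factorization of $u$ is divisible by $c$, so $u$ is a perfect $c$-th power, say $u=U^c$. The identical argument applied to $v^b=q^c$ shows $v=V^c$. Therefore $t=u/v=(U/V)^c$, and $t^{1/c}=U/V\in\Q$, as desired.

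The only genuinely delicate point is handling the meaning of $t^{b/c}$ and $t^{1/c}$ for possibly negative $t$: one must make sure a real $c$-th root exists and is the object referred to. Since the lemma is applied in a context where $t>0$ (the parameter of a visibility curve satisfies $0<t<1$), I expect the cleanest route is simply to state at the outset that we take $t>0$, so all roots are the unique positive real roots and the factorization argument above goes through verbatim; no case analysis on signs is then needed. The passage $u^b q^c=v^bp^c \Rightarrow u^b=p^c,\ v^b=q^c$ via coprimality, and the step $c\mid be,\ \gcd(b,c)=1 \Rightarrow c\mid e$, are the two computational hinges, and both are routine.
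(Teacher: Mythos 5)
Your proof is correct and rests on the same core idea as the paper's: write $t$ as a reduced fraction and use $\gcd(b,c)=1$ to force both numerator and denominator to be perfect $c$-th powers. The only difference is organizational --- you argue directly via the identity $u^bq^c=v^bp^c$ and $\pi$-adic valuations, whereas the paper argues by contradiction through a case analysis on which of $r,s$ fails to be a perfect $c$-th power; your version has the minor advantage of making explicit (via $c\mid be\Rightarrow c\mid e$) the step the paper only asserts, namely that $r^b$ cannot be a perfect $c$-th power unless $r$ is.
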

\begin{proof}
Let $t=\frac{r}{s}$, we can take $r$ and $s$ so that $\gcd(r,s)=1$. Suppose by contradiction that $t^{\frac{1}{c}}\notin \mathbb{Q}$. Then, necessarily $r$ or $s$ is not a perfect $c$-th power. Without loss of generality, assume $r$ is not a perfect $c$-th power. Then $r^b$ is also not a perfect $c$-th power since $\gcd(b,c)=1$ by hypothesis. Hence $r^{\frac{b}{c}}\notin \mathbb{Q}$. There are two cases for $s$, either it is a perfect $c$-th power, or it is not. 

In the first case $s^{\frac{1}{c}}\in \mathbb{Q}$, so $s^{\frac{b}{c}}\in \mathbb{Q}$, thus $\frac{r^{\frac{b}{c}}}{s^{\frac{b}{c}}}=t^{\frac{b}{c}}\notin \mathbb{Q}$ contradicting the hypothesis.
In the second case, both $r^b$ and $s^b$ are not perfect $c$-th powers (again because $\gcd(b,c)=1$). Let $\alpha:=r^b$, $\beta:=s^b$, it follows that $\gcd(\alpha,\beta)=1$. If
$\frac{\alpha^{\frac{1}{c}}}{\beta^{\frac{1}{c}}}\in \mathbb{Q}$, then $({\frac{\alpha}{\beta}})^{\frac{1}{c}}\in \mathbb{Q}$, hence $(\alpha\cdot {\beta^{-1}})^{\frac{1}{c}}\in \mathbb{Q}$, but we chose $\alpha$ and $\beta$ with $\gcd(\alpha,\beta)=1$, thus {$\alpha\cdot \beta^{-1}$} can not be a perfect $c$-th power, a contradiction.
\end{proof}

\begin{remark}\label{remark:5}
The proof of Lemma \ref{lemma:1} also shows that if $({\frac{\alpha}{\beta}})^{\frac{1}{c}}\in \mathbb{Q}$, for some $\alpha,\beta\in \mathbb{N}$ with~$\gcd(\alpha,\beta)=1$, then both $\alpha$ and $\beta$ must be perfect $c$-th powers. \end{remark}
We are now ready to state our next result. Here we use $\gcd(\bb)=1$ to mean that there is an integer linear combination of the entries of $\bb$ that equals $1$. 

\begin{theorem}\label{thm:ifandonlyif3}
Fix ${\bf{b}}=(\frac{b_1}{a_1},\frac{b_2}{a_2},\ldots,\frac{b_k}{a_k})\in{\mathbb{Q}_{>0}}^k$ with $\gcd(\bb)=1$. Then the lattice
point ${\bf{n}}=({\ell_1}^{\frac{\alpha}{a_1}},{\ell_2}^{\frac{\alpha}{a_2}},\ldots,{\ell_k}^{\frac{\alpha}{a_k}})\in\mathbb{N}_{\frac{\alpha}{a_1}}\times \ldots \times \mathbb{N}_{\frac{\alpha}{a_k}}$ is ${\bf{b}}$-visible if and only if $(\ell_1,\ell_2,\ldots,\ell_k)$ is $(b_1,b_2,\ldots,b_k)$-visible.
\end{theorem}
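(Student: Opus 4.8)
The plan is to translate the $\bb$-visibility of $\nn=(\ell_1^{\alpha/a_1},\dots,\ell_k^{\alpha/a_k})$ in $\Q_{>0}^k$ back to ordinary $(b_1,\dots,b_k)$-visibility of $(\ell_1,\dots,\ell_k)$ in $\N^k$ by relating a witness $0<t<1$ for the non-visibility of $\nn$ to a witness $0<s<1$ for the non-visibility of $(\ell_1,\dots,\ell_k)$, via the substitution $s=t^{1/\alpha}$ (equivalently $t=s^\alpha$). By Lemma \ref{lem:techresult} we may assume throughout that the ambient family of functions for $\nn$ is the restricted one, so that $\nn$ is $\bb$-visible iff there is no $0<t<1$ with $\left(\ell_1^{\alpha/a_1}t^{b_1/a_1},\dots,\ell_k^{\alpha/a_k}t^{b_k/a_k}\right)\in\mathbb{N}_{\alpha/a_1}\times\cdots\times\mathbb{N}_{\alpha/a_k}$; and Theorem \ref{thm:ifandonlyif1} lets us replace $(b_1,\dots,b_k)$-visibility of $(\ell_1,\dots,\ell_k)$ by the number-theoretic condition that no prime $p$ satisfies $p^{b_i}\mid \ell_i$ for all $i$.

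First I would prove the easier implication: if $(\ell_1,\dots,\ell_k)$ is $(b_1,\dots,b_k)$-invisible, then by Theorem \ref{thm:ifandonlyif1} there is a prime $p$ with $p^{b_i}\mid\ell_i$ for every $i$; writing $\ell_i=p^{b_i}\ell_i'$, the point $\left((\ell_1')^{\alpha/a_1},\dots,(\ell_k')^{\alpha/a_k}\right)$ lies in $\mathbb{N}_{\alpha/a_1}\times\cdots\times\mathbb{N}_{\alpha/a_k}$ and equals $\ff(t)$ at $t=(1/p)^\alpha<1$ for $\ff(t)=(\ell_1^{\alpha/a_1}t^{b_1/a_1},\dots)$, since $\ell_i^{\alpha/a_i}\cdot p^{-\alpha b_i/a_i}=(\ell_i')^{\alpha/a_i}$. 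Hence $\nn$ is $\bb$-invisible. For the converse, suppose $\nn$ is $\bb$-invisible: there is $0<t<1$ with $n_i t^{b_i/a_i}\in\mathbb{N}_{\alpha/a_i}$ for all $i$. The key point is that $t$ is forced to be rational and in fact an $\alpha$-th power of a rational: by hypothesis $\gcd(\bb)=1$ gives integers $c_i$ with $\sum_i c_i b_i/a_i=1$, so $\prod_i (n_i t^{b_i/a_i})^{c_i}=t\cdot\prod_i n_i^{c_i}$ shows $t\in\Q$ (this is exactly the argument already used in the text after Definition \ref{def:simpleone}, now with rational exponents — I should double check it goes through verbatim, but the same manipulation applies). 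Then writing $n_i t^{b_i/a_i}\in\mathbb{N}_{\alpha/a_i}$ as $(n_i t^{b_i/a_i})^{a_i/\alpha}\in\N$, i.e.\ $\ell_i\, t^{b_i/\alpha}\in\N$, and invoking Lemma \ref{lemma:1} with the pair $(b_i',\,\alpha')$ coming from reducing $b_i/\alpha$ (together with Remark \ref{remark:5} to handle the $\mathbb{N}_{\alpha/a_i}$-membership as perfect-power conditions on numerator and denominator of $t$), I would conclude $t^{1/\alpha}\in\Q$. Setting $s:=t^{1/\alpha}\in(0,1)$, we get $\ell_i s^{b_i}=\ell_i t^{b_i/\alpha}\in\N$ for all $i$, so $(\ell_1,\dots,\ell_k)$ is $(b_1,\dots,b_k)$-invisible by Definition \ref{def:simpleone}.

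The main obstacle I expect is the converse direction, specifically extracting $t^{1/\alpha}\in\Q$ cleanly from the hypotheses. One has to be careful that Lemma \ref{lemma:1} is stated for a single exponent $b/c$ with $\gcd(b,c)=1$, whereas here the relevant exponents $b_i/\alpha$ need not be in lowest terms and the $\mathbb{N}_{\alpha/a_i}$-membership conditions are intertwined across coordinates through the common $t$. The careful bookkeeping is: fix any coordinate $i$, reduce $b_i/\alpha$ to lowest terms, apply Lemma \ref{lemma:1} to get $t^{1/d_i}\in\Q$ for the reduced denominator $d_i$, and then argue that the full strength $t^{1/\alpha}\in\Q$ follows either by combining the information from all coordinates (using that a common refinement of the $d_i$ reaches $\alpha$, which is where $\gcd(\bb)=1$ and $\alpha=\operatorname{lcm}(a_i)$ re-enter) or, more simply, by noting that $\ell_i t^{b_i/\alpha}\in\N$ already forces, via Remark \ref{remark:5} applied to the reduced fraction $t=r/s$ with $\gcd(r,s)=1$, that $r$ and $s$ are perfect $\alpha$-th powers once one accounts for the $\mathbb{N}_{\alpha/a_i}$ structure of $n_i$. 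I would present this as: reduce to $t=r/s$ in lowest terms, show $s$ (and $r$) are perfect $\alpha$-th powers, hence $t^{1/\alpha}\in\Q$, and finish. The forward direction and the reduction via Lemmas \ref{lem:techresult} and \ref{lemma:1} are routine; the perfect-$\alpha$-th-power extraction is the crux.
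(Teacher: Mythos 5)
Your proposal is correct and follows essentially the same route as the paper: the forward direction via a prime witness and $t=p^{-\alpha}$, and the converse via $\gcd(\bb)=1\Rightarrow t\in\Q$ followed by Lemma~\ref{lemma:1} and Remark~\ref{remark:5}. The crux you flag --- showing the numerator and denominator of $t=r/s$ are perfect $\alpha$-th powers --- is handled in the paper more directly than by your first suggested route: instead of reducing $b_i/\alpha$ (whose reduced denominator $\alpha/\gcd(b_i,\alpha)$ may be strictly smaller than $\alpha$, forcing the cross-coordinate lcm argument, which does go through but really uses $\gcd(b_i,a_i)=1$ rather than $\gcd(\bb)=1$), apply Lemma~\ref{lemma:1} coordinatewise to the exponents $b_i/a_i$, which are already in lowest terms; this gives $t^{1/a_i}\in\Q$ for every $i$, so by Remark~\ref{remark:5} every prime exponent of $r$ and of $s$ is divisible by each $a_i$, hence by $\alpha=\lcm(a_1,\ldots,a_k)$. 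Your reformulation $n_it^{b_i/a_i}\in\mathbb{N}_{\frac{\alpha}{a_i}}\iff \ell_i t^{b_i/\alpha}\in\N$ is a clean way to finish; note moreover that Definition~\ref{def:simpleone} accepts the \emph{real} witness $u=t^{1/\alpha}$, so strictly speaking the rationality of $t^{1/\alpha}$ is not needed to conclude that $(\ell_1,\ldots,\ell_k)$ is $(b_1,\ldots,b_k)$-invisible (the paper instead finishes by exhibiting a prime $p$ with $p^{b_i}\mid\ell_i$ for all $i$, which is equivalent via Theorem~\ref{thm:ifandonlyif1}).
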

\begin{proof}
Suppose that ${\bf{n}}$ is ${\bf{b}}$-visible and recall $\alpha:=\lcm(a_1,\ldots,a_k)$. 
 If $(\ell_1,\ell_2,\ldots,\ell_k)$ is not $(b_1,b_2,\ldots,b_k)$-visible,
then there exists a prime $p$ satisfying $p^{b_i}|\ell_i$ for all $i=1,\ldots,k$ and $({\ell_1}\cdot(\frac{1}{p})^{b_1},\ldots,\ell_k\cdot(\frac{1}{p})^{b_k})$ is an integer lattice point. Hence, the point $$\left({{\ell_1}^{\frac{\alpha}{a_1}}}\cdot\left(\frac{1}{p^{\alpha}}\right)^{\frac{b_1}{a_1}}, \ldots,{{\ell_k}^{\frac{\alpha}{a_k}}}\cdot\left(\frac{1}{p^{\alpha}}\right)^{\frac{b_k}{a_k}}\right)=
\left({{\left(\frac{\ell_1}{p^{b_1}}\right)}^{\frac{\alpha}{a_1}}}, \ldots,{{\left(\frac{\ell_k}{p^{b_k}}\right)}^{\frac{\alpha}{a_k}}}\right)$$ 
belongs to the set $\N_{\frac{\alpha}{a_1}}\times\cdots\times \N_{\frac{\alpha}{a_k}}$. 
Recall that
being $\bb$-visible does not depend on the choice of $(m_1,...,m_k)$. This together with
the fact that $\frac{1}{p^\alpha}<1$, implies that ${\bf{n}}$ is ${\bf{b}}$-invisible, which is a contradiction.

Now suppose that ${\bf{n}}$ is not ${\bf{b}}$-visible. Then there exists  $t<1$ such that $({\ell_1}^{\frac{\alpha}{a_1}}\cdot t^{\frac{b_1}{a_1}},\ldots,{\ell_k}^{\frac{\alpha}{a_k}}\cdot t^{\frac{b_k}{a_k}})\in \mathbb{N}_{\frac{\alpha}{a_1}}\times \ldots \times \mathbb{N}_{\frac{\alpha}{a_n}}$. 
In particular, we have that $t^{\frac{b_i}{a_i}}\in \mathbb{Q}$ for all $i=1,\ldots,k$, and since $\gcd({\bf{b}})=1$ it follows that $t\in \mathbb{Q}$.
By Lemma \ref{lemma:1}, we have that $t^{\frac{1}{a_i}}\in \mathbb{Q}$ for $i=1,\ldots,k$, and by Remark \ref{remark:5}, $t=\frac{r}{s}$ for some $r,s\in \mathbb{N}$, where $r$ and $s$ are perfect $a_i$-th powers for all $i=1,\ldots,k$. 

Hence, $s=\prod\limits_{j=1}^{d}p_j^{c_j}$ for some prime numbers $p_1,\ldots, p_\ell$, and integers $c_1,\ldots, c_\ell$ satisfying $\alpha|c_j$ for all $1\leq j\leq d$. Thus, for each $1\leq j\leq d$, there exist $c_j'$  such that $c_j=\alpha\cdot c_j'$. Thus, for all $i=1,\ldots,k$, we have $$\left({\frac{1}{s}}\right)^{\frac{b_i}{a_i}}=\left(\frac{1}{\prod_{j=1}^{d}p_j^{b_i\cdot \alpha \cdot c_j'}}\right)^{\frac{1}{a_i}}=\left(\frac{1}{\prod_{j=1}^{d}p_j^{b_i \cdot c_j'}}\right)^{\frac{\alpha}{a_i}}
$$
Thus,
$\ell_i^{\frac{\alpha}{a_i}}\cdot t^{\frac{b_i}{a_i}} \in \mathbb{N}$ implies that $$\ell_i^{\frac{\alpha}{a_i}}\cdot \left({\frac{1}{s}}\right)^{\frac{b_i}{a_i}}=\left({\frac{\ell_i}{\prod_{j=1}^{d}p_j^{b_i \cdot c_j'}}}\right)^{\frac{\alpha}{a_i}}\in \mathbb{N}.$$

In particular, for any $j=1,\ldots,d$, we have that $p_j^{b_i}|\ell_i$ for all $i=1,\ldots, k$.  Therefore, $(\ell_1,\ell_2,\ldots,\ell_k)$ is not $(b_1,b_2,\ldots,b_k)$-visible, as desired.
\end{proof}

Finally, we give a definition for $\bb$-visibility allowing for negative rational exponents. Let ${\bf{b}}\in ({\mathbb{Q}^{*}})^k$, that is, a $k$-tuple whose entries might be positive or negative rationals, but not $0$. 
This will allow us to generalize the work of Harris and Omar \cite{HarrisOmar2017}. 
From now on, whenever we write a rational as $\frac{b}{a}$, $a$ is assumed to be positive.

\begin{definition}[${\bf{b}}$-visibility for $\bb\in({\mathbb{Q}^{*}})^k$]\label{def:bvisibilityinndimensions3}
Let ${\bf{b}}=(\frac{b_1}{a_1},\ldots,\frac{b_k}{a_k})\in ({\mathbb{Q}^{*}})^k$, and let $\alpha:=\lcm(a_1,\ldots,a_k)$. 
Then an integer lattice point ${\bf{n}}=(n_{1},\ldots,n_{k})\in \mathbb{N}_{\frac{\alpha}{a_1}}\times \ldots \times \mathbb{N}_{\frac{\alpha}{a_k}}$ is \emph{${\bf{b}}$-visible} if the following conditions hold:
\begin{enumerate}
    \item ${\bf{n}}$ lies on the graph of ${\bf{f}}(t)=(m_{1}t^{\frac{b_1}{a_1}},\ldots,m_{k}t^{\frac{b_k}{a_k}})$ for some ${\bf{m}}=(m_1,m_2,\ldots,m_k) \in \mathbb{N}_{\frac{\alpha}{a_1}}\times \cdots \times \mathbb{N}_{\frac{\alpha}{a_k}}$.
    \item There does not exist $t' \in \mathbb{R}$ such that $0<t'<t$, and $(m_{1}{(t')}^{\frac{b_1}{a_1}},\ldots,m_{k}{(t')}^{\frac{b_k}{a_k}})$ is a point in $\mathbb{N}_{\frac{\alpha}{a_1}}\times \ldots \times \mathbb{N}_{\frac{\alpha}{a_k}}$.
\end{enumerate}
If condition (1) is satisfied, but condition (2) is not, then we say that the point ${\bf{n}}$ is  \emph{${\bf{b}}$-invisible}.
\end{definition}

\begin{remark}\label{remark:6}
Being $\bb$-visible does not depend on the choice of $(m_1,...,m_k)$, just as in Definition \ref{def1}.
\end{remark}

\begin{theorem}\label{thm:ifandonlyif4}
Let ${\bf{b}}=(\frac{b_1}{a_1},\ldots,\frac{b_k}{a_k})\in({\mathbb{Q}^{*}})^k$ be such that its negative entries are indexed by the set $J \subseteq [k]$, with $\bf{b}$ satisfying $\gcd(\bb)=1$, then the lattice
point ${\bf{n}}=({\ell_1}^{\frac{\alpha}{a_1}},{\ell_2}^{\frac{\alpha}{a_2}},\ldots,{\ell_k}^{\frac{\alpha}{a_k}})\in\mathbb{N}_{\frac{\alpha}{a_1}}\times \ldots \times \mathbb{N}_{\frac{\alpha}{a_k}}$ is ${\bf{b}}$-visible if and only if there does not exist a prime $p$ such that $p^{|b_j|}|\ell_j$ for all $j\in J$.
\end{theorem}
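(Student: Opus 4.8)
The plan is to reduce this to a case analysis mirroring Theorem~\ref{thm:ifandonlyif3}, separating the positive-exponent coordinates (indexed by $[k]\setminus J$) from the negative-exponent ones (indexed by $J$). The key observation is that for $0<t<1$, raising to a negative power $\frac{b_j}{a_j}<0$ means $t^{b_j/a_j}>1$, so as $t$ decreases the $j$-th coordinate $m_j t^{b_j/a_j}$ \emph{increases}; hence a witness $t'<t$ producing a lattice point forces, in the positive coordinates, that the corresponding integers shrink, while in the negative coordinates nothing obstructs growth. This asymmetry is exactly why only the entries in $J$ should matter for the proportion.

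First I would establish, exactly as in the displayed computation after Definition~\ref{def:simpleone} and in the proof of Theorem~\ref{thm:ifandonlyif3}, that if $\nn$ is $\bb$-invisible then the witnessing $t$ can be taken rational: since $(m_i t^{b_i/a_i})_i$ lies in $\N_{\alpha/a_1}\times\cdots\times\N_{\alpha/a_k}$, each $t^{b_i/a_i}\in\Q$, and using an integer linear combination $\sum_i m_i' \cdot \frac{b_i}{a_i}\cdot(\text{suitable clearing})$ — more precisely, since $\gcd(\bb)=1$ there are integers realizing $\sum \lambda_i \frac{b_i}{a_i}=1$ so $\prod_i (t^{b_i/a_i})^{\lambda_i}=t\in\Q$. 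Then Lemma~\ref{lemma:1} and Remark~\ref{remark:5} apply verbatim: writing $t=r/s$ in lowest terms, both $r$ and $s$ are perfect $a_i$-th powers for every $i$, hence (since $\alpha=\lcm(a_i)$) their prime exponents are divisible by $\alpha$, and $t = (R/S)^{\alpha}$ for coprime $R,S\in\N$ with $R<S$ (note $R\ge 1$ since $t>0$). Conversely, any such $t=(R/S)^\alpha$ with $R/S<1$ is a legitimate candidate.

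Next, plugging $t=(R/S)^\alpha$ into coordinate $i$ gives $\ell_i^{\alpha/a_i}\cdot t^{b_i/a_i} = \left(\ell_i \cdot (R/S)^{b_i}\right)^{\alpha/a_i}$; this lies in $\N_{\alpha/a_i}$ iff $\ell_i (R/S)^{b_i}\in\N$, i.e. iff $S^{b_i}\mid \ell_i R^{b_i}$, which (as $\gcd(R,S)=1$) is $S^{b_i}\mid\ell_i$ when $b_i>0$. For $j\in J$, $b_j<0$, so $\ell_j(R/S)^{b_j}=\ell_j R^{|b_j|}/S^{|b_j|}$, and lying in $\N$ requires $S^{|b_j|}\mid \ell_j R^{|b_j|}$, i.e. $S^{|b_j|}\mid\ell_j$. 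But also for $j\in J$ we must ensure the result is an $\frac{\alpha}{a_j}$-th power — that is automatic from the displayed form. So: $\nn$ is $\bb$-invisible iff there is a rational $R/S<1$ in lowest terms with $R\geq 1$, $S\geq 2$, such that $S^{b_i}\mid\ell_i$ for $i\notin J$ and $S^{|b_j|}\mid\ell_j$ for $j\in J$ — which holds iff some prime $p\mid S$ satisfies $p^{b_i}\mid\ell_i$ for all $i\notin J$ and $p^{|b_j|}\mid\ell_j$ for all $j\in J$. This is \emph{not} yet the claimed statement, so the crucial final move is: we are free to choose $R$ and $S$, and for $i\notin J$ (positive exponent) we can \emph{shrink} $\ell_i$ by instead shrinking the corresponding $m_i$ in the defining function — by Remark~\ref{remark:6} visibility is independent of the choice of $\mathbf m$, so the positive coordinates impose no constraint: given any prime $p$ with $p^{|b_j|}\mid\ell_j$ for all $j\in J$, take $S=p$ and $R=1$; then in the negative coordinates $\ell_j/p^{|b_j|}\cdot 1 \in\N$ works, and in the positive coordinates we replace the function's $m_i$ by $m_i p^{b_i}$... wait — one must instead argue from the function side. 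I would phrase it as: pick the function $\ff$ through $\nn$ with $m_i=\ell_i^{\alpha/a_i}$ for $i\in J$ and $m_i = (p^{b_i}\ell_i)^{\alpha/a_i}$ for $i\notin J$ — this is still in $\N_{\alpha/a_1}\times\cdots\times\N_{\alpha/a_k}$ and still passes through $\nn$ at $t=p^{-\alpha}$ in the positive coords and appropriately elsewhere; evaluating at a smaller $t'$ shows invisibility. Conversely if no such prime exists for $J$, then no $t'<t$ can work regardless of the positive coordinates, since the $J$-coordinates alone already fail to land in $\N_{\alpha/a_j}$.

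The main obstacle I anticipate is the bookkeeping around Remark~\ref{remark:6}: making precise that the positive-exponent coordinates genuinely impose \emph{no} divisibility obstruction because one may rechoose $\mathbf m$ freely, and correctly tracking that the negative-exponent coordinates require going to \emph{smaller} $t$ (so that $t^{b_j/a_j}$ grows) while still producing a point in $\N_{\alpha/a_j}$ — i.e. that $S^{|b_j|}\mid\ell_j$ is both necessary and sufficient there. Once that is pinned down, the prime $p$ dividing the common "$S$" gives exactly the condition $p^{|b_j|}\mid\ell_j$ for all $j\in J$, and the equivalence follows.
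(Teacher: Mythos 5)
There is a genuine gap here, and it is directional. Your reduction of the witnessing parameter to a rational of the form $t=(R/S)^{\alpha}$ via $\gcd(\bb)=1$, Lemma~\ref{lemma:1} and Remark~\ref{remark:5} is fine, but the core computation then goes wrong in two compounding ways. First, an algebra slip: for $j\in J$ you write $\ell_j(R/S)^{b_j}=\ell_j R^{|b_j|}/S^{|b_j|}$, but since $b_j<0$ this quantity is $\ell_j S^{|b_j|}/R^{|b_j|}$, so the integrality condition on a negative coordinate is $R^{|b_j|}\mid\ell_j$, not $S^{|b_j|}\mid\ell_j$ --- and with $R=1$ (always allowed when $t<1$) this is vacuous. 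In other words, if the invisibility witness is forced to sit at a \emph{smaller} parameter $t'<t$, the negative coordinates impose no constraint whatsoever and it is the \emph{positive} coordinates that do; carried out correctly, your route proves that visibility is governed by $[k]\setminus J$, the opposite of the statement. The theorem is using the Harris--Omar ``visibility from the point at infinity'' convention for negative exponents, and the paper's proof accordingly places the witness at a \emph{larger} parameter: evaluating the curve through $\nn$ at $t=p^{\alpha}>1$ sends coordinate $h\notin J$ to $(\ell_h p^{b_h})^{\alpha/a_h}$, which lies in $\N_{\frac{\alpha}{a_h}}$ automatically, and coordinate $h\in J$ to $(\ell_h/p^{|b_h|})^{\alpha/a_h}$, which is integral precisely when $p^{|b_h|}\mid\ell_h$; this is why only $J$ matters. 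Your closing claim for the converse (``the $J$-coordinates alone already fail to land in $\N_{\frac{\alpha}{a_j}}$'' for $t'<t$) is false for the same reason: dividing $t$ by $p^{\alpha}$ \emph{multiplies} a $J$-coordinate by $p^{|b_j|\alpha/a_j}$ and always lands in $\N_{\frac{\alpha}{a_j}}$.

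The patch you propose via Remark~\ref{remark:6} also does not survive scrutiny. Taking $m_i=(p^{b_i}\ell_i)^{\alpha/a_i}$ for $i\notin J$ and $m_j=\ell_j^{\alpha/a_j}$ for $j\in J$ produces a curve whose positive coordinates pass through those of $\nn$ at $t=p^{-\alpha}$ while its $J$-coordinates pass through those of $\nn$ at $t=1$; there is no single parameter value at which this curve equals $\nn$, so condition (1) of Definition~\ref{def:bvisibilityinndimensions3} fails and Remark~\ref{remark:6} cannot be invoked. That remark says the visibility verdict is the same for every $\mathbf{m}$ for which $\nn$ actually lies on the curve; it does not permit decoupling the coordinates or discarding the divisibility constraints coming from $[k]\setminus J$. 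To repair the argument, replace the entire $t'<t$ analysis by the one-line construction at $t=p^{\alpha}$ for the ``invisible'' direction, and for the converse run the argument of Theorem~\ref{thm:ifandonlyif3} on a witness $t'>t$, where the denominator of $(t/t')^{\alpha}$ forces $p^{|b_j|}\mid\ell_j$ for the coordinates $j\in J$ only.
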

\begin{proof}
Suppose that there exists a prime $p$ with $p^{|b_j|}|\ell_j$ for all $j\in J$. It follows that the $h$ entry of the point

\begin{align}
\left({{\ell_1}^{\frac{\alpha}{a_1}}}\cdot\left(p^{\alpha}\right)^{\frac{b_1}{a_1}}, {{\ell_2}^{\frac{\alpha}{a_2}}}\cdot\left({p^{\alpha}}\right)^{\frac{b_2}{a_2}},\ldots,{{\ell_k}^{\frac{\alpha}{a_k}}}\cdot\left(p^{\alpha}\right)^{\frac{b_k}{a_k}} \right)\label{eq:point}
\end{align}
% \[
% \left(\ldots,{{\ell_i}^{\frac{\alpha}{a_i}}}\cdot\left(p^{\alpha}\right)^{\frac{b_i}{a_i}}, \ldots,{{\ell_j}^{\frac{\alpha}{a_j}}}\cdot\left(\frac{1}{p^{\alpha}}\right)^{\left |\frac{b_j}{a_j}\right|},\ldots \right)=\left(\ldots,{{\ell_i}^{\frac{\alpha}{a_i}}}\cdot\left(p^{\alpha}\right)^{\frac{b_i}{a_i}},\ldots,{{\left(\frac{\ell_j}{p^{|b_j|}}\right)}^{\frac{\alpha}{a_j}}}, \ldots \right)
% \] 
is $\ell_h^{\frac{\alpha}{a_h}}(p^\alpha)^{\frac{b_h}{a_h}}$ if $h\notin J$ and $\left(\frac{\ell_h}{p^{|b_h|}}\right)^{\frac{\alpha}{a_h}}$ if $h\in J$. Also the point in \eqref{eq:point} belongs to the set $\N_{\frac{\alpha}{a_1}}\times\cdots\times \N_{\frac{\alpha}{a_k}}$ because all of the coordinates $h$ for which $h\notin J$ are products of integers, and the coordinates $h$ for which $h\in J$ are integer powers of integers, by assumption. Thus, for $p^{\alpha}>1$, $\bf{n}$ is not $\bf{b}$-visible.

The other direction is analogous to the argument presented in Theorem \ref{thm:ifandonlyif3}, restricted to the entries $h$ for which $h\in J$.  
\end{proof}

\begin{remark}\label{remark:3}
Observe that the last theorem bears a close resemblance to Theorem \ref{thm:ifandonlyif1}, and it as if we had restricted the visibility of $\bf{n}$ to the visibility of those entries of $\bf{n}$ which are in $J$.
\end{remark}

\section{Proportion of $\bb$-visible lattice points}\label{sec:mainresult}

Given the definition and results in the previous section, we  now present our main results. The arguments we make in the proof of Theorem 1 are a generalization of the arguments made by Pinsky in \cite{Pinsky}.
\begin{duplicate}%[Theorem~\ref{thm:main}]
Let ${\bf{b}}\in \mathbb{N}^k$, such that ${\bf{b}}=(b_1,\dots,b_k)$ satisfies $\gcd(\bb)=1$. Then the proportion of points in $ \mathbb{N}^k$ that are $\bf{b}$-visible is $$\displaystyle\dfrac{1}{\zeta\left(\displaystyle\sum_{i=1}^{k}b_i\right)}.$$
\end{duplicate}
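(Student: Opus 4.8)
The plan is to convert the statement into a lattice-point counting problem and then evaluate it with a Legendre-type inclusion--exclusion sieve over primes, which is the higher-dimensional analogue of Pinsky's argument.

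First I would invoke Corollary~\ref{cor:numbertheoretic}: since $\gcd(\bb)=1$, a point $\nn=(n_1,\dots,n_k)$ is $\bb$-visible if and only if it is $\bb$-relatively prime, i.e.\ no prime $p$ satisfies $p^{b_i}\mid n_i$ for all $i=1,\dots,k$. Hence, writing $[N]=\{1,\dots,N\}$ and $s=\sum_{i=1}^k b_i$, the proportion to be computed is $\lim_{N\to\infty}N^{-k}V(N)$, where $V(N)$ counts the $\bb$-relatively prime points in $[N]^k$. (The case $k=1$ forces $\bb=(1)$, so only $\nn=(1)$ is visible, matching $1/\zeta(1)=0$; hence assume $k\ge 2$, whence $s\ge k\ge 2$, $\zeta(s)$ converges, and $\sum_{d\ge 1}\mu(d)d^{-s}=\prod_p(1-p^{-s})=1/\zeta(s)$.)

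Next, the sieve. For a squarefree $d$, a point $\nn\in[N]^k$ satisfies $p^{b_i}\mid n_i$ for all $i$ and all primes $p\mid d$ precisely when $d^{b_i}\mid n_i$ for all $i$, and there are $\prod_{i=1}^k\lfloor N/d^{b_i}\rfloor$ such points. Since any $\nn\in[N]^k$ has only finitely many ``bad'' primes (such a $p$ obeys $p^{b_1}\le n_1\le N$), applying the M\"obius identity to the radical of the product of bad primes gives the exact finite identity
\[
V(N)=\sum_{d\ge 1}\mu(d)\prod_{i=1}^{k}\Big\lfloor \frac{N}{d^{b_i}}\Big\rfloor .
\]
I would then fix a threshold $D$ and split this sum. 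On the tail $d>D$: whenever the product is nonzero each factor is at most $N/d^{b_i}$, so $0\le N^{-k}\prod_i\lfloor N/d^{b_i}\rfloor\le d^{-s}$, and the tail contributes at most $\sum_{d>D}d^{-s}$ in absolute value \emph{uniformly in $N$} --- this is the step that uses $s\ge 2$. On the head $d\le D$: applying $\big|\prod_i a_i-\prod_i b_i\big|\le\sum_i|a_i-b_i|$ with $a_i=N^{-1}\lfloor N/d^{b_i}\rfloor\in[0,1]$, $b_i=d^{-b_i}$ and $|a_i-b_i|\le 1/N$ gives $\sum_{d\le D}\mu(d)N^{-k}\prod_i\lfloor N/d^{b_i}\rfloor=\sum_{d\le D}\mu(d)d^{-s}+O(kD/N)$.

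Finally, an $\varepsilon$-chase closes the argument: given $\varepsilon>0$, pick $D$ with $\sum_{d>D}d^{-s}<\varepsilon/3$ and $\big|\sum_{d\le D}\mu(d)d^{-s}-1/\zeta(s)\big|<\varepsilon/3$, then take $N$ large enough that $kD/N<\varepsilon/3$; summing the three estimates yields $\big|N^{-k}V(N)-1/\zeta(s)\big|<\varepsilon$. I expect the only genuinely delicate points to be bookkeeping: confirming that the inclusion--exclusion sum is truly finite (so that rearranging it into the M\"obius sum is legitimate) and making the tail bound uniform in $N$ rather than proving it for each $N$ separately. If $0\in\N$ is intended in the definition of $\N^k$, the points with a zero coordinate form an $O(N^{k-1})$ set and do not affect the density.
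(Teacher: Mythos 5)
Your argument is correct, and it takes a genuinely different route from the paper. Both proofs start from the same number-theoretic characterization (Theorem \ref{thm:ifandonlyif1} / Corollary \ref{cor:numbertheoretic}), but from there the paper follows Pinsky's probabilistic scheme: it introduces the events $C_{p,N}$ that $p^{b_i}\mid n_i$ for all $i$, proves they are independent under the uniform measure on $[N]^k$ \emph{only when} $N$ is a multiple of $R=\prod_{i\le r}p_i^{b_i}$, and then handles arbitrary $N$ by sandwiching $N$ between consecutive multiples $N_1\le N\le N_2$ of $R$ and using monotonicity of the count of visible points, before letting $N\to\infty$ and then $r\to\infty$. Your Legendre--M\"obius sieve replaces all of that with the single exact identity $V(N)=\sum_{d\ge1}\mu(d)\prod_i\lfloor N/d^{b_i}\rfloor$ (finite since the summand vanishes for $d>N$), a tail bound $\sum_{d>D}d^{-s}$ that is uniform in $N$ because $s=\sum_i b_i\ge 2$, and the elementary head estimate via $\bigl|\prod a_i-\prod b_i\bigr|\le\sum|a_i-b_i|$. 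What your approach buys is the elimination of the divisibility constraint $R\mid N$ and the sandwiching step entirely, plus an explicit, quantitative error term (choosing $D$ as a suitable power of $N$ gives a rate of convergence); it also sidesteps the bookkeeping around the independence claim, which in the paper is the most delicate point. What the paper's approach buys is conceptual continuity with the probabilistic treatment of the classical $k$-dimensional visibility problem and with the $2$-dimensional $b$-visibility literature it generalizes. Your separate disposal of $k=1$ (where $1/\zeta(1)=0$) and of possible zero coordinates is careful and harmless; neither issue arises in the paper's setup since it samples from $[N]=\{1,\dots,N\}$.
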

% \begin{theorem}\label{thm:mainresult}
% \end{theorem}
% \begin{proof}
% Let $[N]:=\{1,\ldots,N\}$, and let the entries of the vector $\bf{n}$ be picked with uniform probability in $[N]$. Fix a prime $p\in [N]$, the probability that $p^{b_i}|n_i$ for all $i=0,\ldots,k$ is $P_{N}=\prod_{i=1}^{k}\frac{1}{N} \left \lfloor{ \frac{1}{p^{b_i}}} \right \rfloor$, thus the probability that at least for some $i\in [K]$, $p^{b_i}\nmid n_i$, is $1-P_{N}$. Furthermore, since $P_{N} \to \prod_{i=1}^{k}\frac{1}{p^{b_i}}=\frac{1}{p^{\sum_{i=1}^{k}b_i}}$, the probability that $\bf{n}$ is ${\bf{b}}$-invisible according to theorem 1 is:
% $$\lim_{N \to \infty}{\prod_{p:p\leq N }(1-\frac{1}{P_{N}})}=\lim_{N \to \infty}{\prod_{p:p\leq N }(1-\frac{1}{p^{\sum_{i=1}^{k}b_i}})}=\frac{1}{\zeta(\sum_{i=1}^{k}b_i)}$$
% \end{proof}
\begin{proof} Let $N\in\mathbb N$ and set $[N]:=\{1,\ldots,N\}$. We want to compute the proportion of points $\mathbf{n}=(n_1,\dots,n_k)\in \mathbb{N}^k$ which are $\bf{b}$-visible. To this end, for each $i=1,\ldots k$, pick $n_i$ integers from the set $[N]$ with uniform probability. This results in a $k$-tuple ${\bf{n}}\in [N]^k$. The associated probability space is $([N]^k, P_N)$, where $P_N$ is the uniform measure.
%, and a vector ${\bf{v}}=(v_1,\ldots,v_k)\in [N]^k$ stands for the event of choosing first $v_1$, then $v_2$, and so forth. 

Fix $\bb=(b_1,b_2,\ldots,b_k)\in \N^k$ and let $p$ be prime. If $C_{p,N}$ denotes the event that the prime $p$ satisfies $p^{b_i}|n_i$ for all $i=1,\ldots,k$, then its probability is given by
\begin{align}
P_N(C_{p,N})&=\frac{1}{N^k}\prod_{i=1}^{k} \left \lfloor{ \frac{N}{p^{b_i}}} \right \rfloor\label{eq:probabilityofevent}
\end{align}
for $p<N$ since there are $\left \lfloor{\frac{N}{p^{b_i}}} \right \rfloor$ numbers divisible by $p^{b_i}$ in $[N]$. 
We now establish that the events $C_{p,N}$ are independent.\\

%Before continuing we establish the following claim:
\noindent{\bf{Claim}.}
Let $R=\prod_{i=1}^rp_i^{b_i}$, where $p_i$ corresponds to the $i$-th prime ($p_1=2$, $p_2=3$,\ldots). If $R|N$, then the events $C_{p_i,N}$ are independent for $i=1,\ldots,r$.\\

\noindent{\emph{Proof of claim.}} If $p_i$ for $i=1,\ldots,r$ are distinct primes, and $m\in[N]$ is arbitrary, then $m$ can only be divisible by $p_i^{b_i}$ for all $i=1\ldots,r$ if $m=\left(\prod_{j=1}^{r}p_j^{b_j}\right)l$,for some $l\in \mathbb{N}$. Note that there must be $\left \lfloor{ \frac{N}{\prod_{j=1}^{r}p_j^{b_j}}} \right \rfloor$ such numbers $m\in[N]$, and the events that $\prod_{j=1}^{r}p_j^{b_j}|n_j$, and $\prod_{j=1}^{r}p_j^{b_j}|n_k$ are independent for all $j\neq k$. Hence

\begin{align}
    P_N\left(\bigcap\limits_{i=1}^{r}C_{p_i,N}\right)&=\frac{1}{N^k}\prod\limits_{i=1}^{k}\left \lfloor{ \frac{N}{\prod_{j=1}^{r}p_j^{b_j}}}\right \rfloor
\end{align}

Moreover, by hypothesis, $R|N$, thus $\left \lfloor{ \frac{N}{\prod_{j=1}^{r}p_j^{b_j}}} \right \rfloor=\frac{N}{\prod_{j=1}^{r}p_j^{b_j}}$. 
The same argument using $p_1,\ldots,p_k$, establishes that there are $\frac{N}{\prod_{j=1}^{r}p_j^{b_j}}$ numbers in $[N]$ divisible by $p_1^{b_1},\ldots,p_k^{b_k}$. Hence

\begin{align}
P_N\left(\bigcap\limits_{i=1}^{r}C_{p_i,N}\right)&=\frac{1}{N^k}\prod\limits_{i=1}^{k}N\left(\prod_{j=1}^{r}\frac{1}{p_j^{b_j}}\right)\\
&=
\prod\limits_{j=1}^{r}\prod_{i=1}^{k}\frac{1}{p_j^{b_j}}\\
&=
\prod\limits_{j=1}^{r}P_N(C_{p_j,N})
\end{align}
as we wanted to show.

We now return to the main proof.
Clearly, for an arbitrary $r\in \mathbb{N}$ and a fixed prime $p_i$
\begin{align}
\bigcap\limits_{i=1}^{\infty}{(C_{p_i,N})}^{c}&=\left(
\bigcap\limits_{i=1}^{r}{(C_{p_i,N})}^{c}\right) \setminus \left(\bigcup\limits_{i=r}^{\infty}C_{p_i,N}\right).\label{eq:relation}
\end{align}

Since $P_N$ is the uniform measure, it always takes finite values. Hence, by the relation in \eqref{eq:relation}, and using the subadditivity property we obtain
\begin{align}
    P_N\left(\bigcap_{i=1}^{r}{(C_{p_i,N})}^{c}\right)&=P_N\left(\bigcap_{i=1}^{\infty}{(C_{p_i,N})}^c\right)+P_N\left(\left(\bigcap_{i=1}^{\infty}{(C_{p_i,N})}^c\right)^{c}\cap \left(\bigcap_{i=1}^{r}{(C_{p_i,N})}^{c}\right)\right)\label{eq:8}
\end{align}
rewriting \eqref{eq:8} yields
\begin{align}
    P_N\left(\bigcap_{i=1}^{\infty}{(C_{p_i,N})}^c\right)
    &=
    P_N\left(\bigcap_{i=1}^{r}{(C_{p_i,N})}^{c}\right)-P_N\left((\bigcap_{i=1}^{\infty}{(C_{p_i,N})}^{c})^{c}\cap (\bigcap_{i=1}^{r}{(C_{p_i,N})}^{c})\right)\\
    &\geq
    P_N\left(\bigcap_{i=1}^{r}{(C_{p_i,N})}^{c}\right)-P_N\left(\bigcup_{i=1}^{\infty}C_{p_i,N}\right)
\end{align}
and lastly
\begin{align}
    P_N\left(\bigcap\limits_{i=1}^{r}{(C_{p_i,N})}^{c}\right)-P_N\left(\bigcup\limits_{i=r}^{\infty}C_{p_i,N}\right)
    &\leq 
    P_N\left(\bigcap\limits_{i=1}^{\infty}{(C_{p_i,N})}^{c}\right)\leq \sum\limits_{i=1}^{\infty}P_N(C_{p_i,N}).\label{eq:11}
\end{align}

Furthermore, by \eqref{eq:probabilityofevent} we have that 
\begin{align}
    \sum\limits_{i=1}^{\infty}P_N(C_{p_i,N})
&=
\sum\limits_{i=1}^{\infty}\left(\frac{1}{N^k} \prod_{j=1}^{k}\left \lfloor{\frac{N}{p_j^{b_j}}} \right \rfloor\right) \leq
\sum\limits_{i=1}^{\infty}\frac{1}{\prod_{j=1}^{k}p_j^{b_j}}.
\end{align}

If we let $R=\prod_{i=1}^rp_i^{b_i}$, and $N$ is such that $R|N$, we have by the previous claim that the events $C_{p_i,N}$ are independent for $i=1,\ldots,r$. This implies that the events ${(C_{p_i,N})}^{c}$ are also independent. Then, $P_N\left(\bigcap_{i=1}^{r}{(C_{p_i,N})}^{c}\right)=\prod_{i=1}^{r}\left(1-\frac{1}{\prod_{j=1}^{k}p_i^{b_j}}\right)$. 

By \eqref{eq:11}, if $R|N$ we have that
\begin{align}\prod_{i=1}^{r}\left(1-\frac{1}{\prod_{j=1}^{k}p_i^{b_j}}\right)- \sum\limits_{i=r}^{\infty}\frac{1}{\prod_{j=1}^{k}p_j^{b_j}}    \leq P_N\left(\bigcap\limits_{i=1}^{\infty}{(C_{p_i,N})}^{c}\right)\leq 
\prod_{i=1}^{r}\left(1-\frac{1}{\prod_{j=1}^{k}p_i^{b_j}}\right).\label{eq:line}
\end{align}
Recall that $P_N\left(\bigcap\limits_{i=1}^{\infty}{(C_{p_i,N})}^{c}\right)$ is the probability that the $k$-tuple $\nn$ is $\bb$-visible. Thus, $$N^kP_N\left(\bigcap\limits_{i=1}^{\infty}{(C_{p_i,N})}^{c}\right),$$ is the number of lattice points $\bf{n}$ that are ${\bf{b}}$-visible and belong to $[N]^k$.

Now let $N$ be arbitrary ($R\nmid N$ may happen). There exist $k_1,k_2\in \mathbb{N}$ satisfying the inequality $Rk_1< N < Rk_2$, and $k_1\geq k_1'$, $k_2\leq k_2'$ for any other pair $k_1',k_2'\in \mathbb{N}$ that satisfies the inequality.
For that pair $k_1,k_2$ we have by election that $Rk_2-N<R$ and $N-Rk_1<R$. Hence,
 $N-R<Rk_1$
and
$Rk_2<N+R$.

Moreover, because there are more ${\bf{b}}$-visible points in $[N+1]^k$ than in $[N]^k$, the expression $N^kP_N\left(\bigcap\limits_{i=1}^{\infty}{(C_{p_i,N})}^{c}\right)$ is increasing in $N$. In particular if ${N_1}:=Rk_1$, ${N_2}:=Rk_2$, then we have:

\begin{align*}
    {{N_1}}^kP_{N_1}\left(\bigcap\limits_{i=1}^{\infty}{(C_{p_i,N_1})}^{c}\right)
    &\leq 
{N}^kP_N\left(\bigcap\limits_{i=1}^{\infty}{(C_{p_i,N})}^{c}\right)\leq
{{N_2}}^kP_{{N_2}}\left(\bigcap\limits_{i=1}^{\infty}{(C_{p_i,N_2})}^{c}\right)
\end{align*}
and hence
\begin{align}
    \frac{{{N_1}}^k}{N^k}P_{N_1}\left(\bigcap\limits_{i=1}^{\infty}{(C_{p_i,N_1})}^{c}\right)
    &\leq 
P_N\left(\bigcap\limits_{i=1}^{\infty}{(C_{p_i,N})}^{c}\right)\leq
\frac{{{N_2}}^k}{N^k}P_{N_2}\left(\bigcap\limits_{i=1}^{\infty}{(C_{p_i,N_2})}^{c}\right).\label{eq:14}
\end{align}

Since ${N_1}=Rk_1$ and ${N_2}=Rk_2$, then ${N_1}\leq N \leq {N_2}$ holds because of the choice of $k_1$ and $k_2$. It is immediate from the definition of ${N_1}$ and ${N_2}$ that $R|{N_1}$ and $R|{N_2}$. As a consequence of \eqref{eq:line} it follows that \eqref{eq:14} becomes
\begin{align}
\frac{{{N_1}}^k}{N^k}\left(\prod_{i=1}^{r}\left(1-\frac{1}{\prod_{j=1}^{k}p_i^{b_j}}\right)- {\sum\limits_{i=r}^{\infty}\frac{1}{\prod_{j=1}^k p_j^{b_j}}}\right)\leq 
P_N\left(\bigcap\limits_{i=1}^{\infty}{(C_{p_i,N})}^{c}\right)\leq
\frac{{{N_2}}^k}{N^k}\prod_{i=1}^{r}\left(1-\frac{1}{\prod_{j=1}^{k}p_i^{b_j}}\right).\label{eq:last}
\end{align}

Using  Theorem \ref{thm:ifandonlyif1}, finding the proportion of $\bf{b}$-visible points $\N^k$ is equivalent to finding $$P_N\left(\bigcap\limits_{i=1}^{\infty}\left(C_{p_i,N}\right)^{c}\right)$$ as $N$ goes to infinity, which is equivalent to letting $N\to \infty$ in the last inequality of \eqref{eq:last}, and then letting $r\to \infty$. Doing so yields

\begin{align*}
    \lim_{N\to \infty}P_N\left(\bigcap\limits_{i=1}^{\infty}{(C_{p_i,N})}^{c}\right)&=\prod_{i=1}^{\infty}\left(1-\frac{1}{\prod_{j=1}^{k}p_i^{b_j}}\right)
    =
    {\prod_{i=1 }^{\infty}\left(1-\frac{1}{{p_i}^{\sum_{i=1}^{k}b_i}}\right)}
    =\frac{1}{\zeta\left(\sum_{i=1}^{k}b_i\right)}.\qedhere
\end{align*}
\end{proof}

We conclude by noticing that Theorem \ref{thm:main} recovers the results obtained in $b$-visibility, by taking ${\bf{b}}=(1,b)$, and the generalization of the classical lattice point visibility in $n$-dimensions by taking ${\bf{b}}={\bf{1}}=(1,1,\ldots,1)\in\N^n$.

We now generalize the results obtained by Harris and Omar \cite{HarrisOmar2017} regarding $b$-visibility allowing rational exponents. 
\begin{duplicate}%[Theorem~\ref{thm:main2}]
% \begin{theorem}\label{thm:ifandonlyif5}
Fix ${\bf{b}}=(\frac{b_1}{a_1},\frac{b_2}{a_2},\ldots,\frac{b_k}{a_k})\in{\mathbb{Q}_{>0}}^k$, with $\bf{b}$ satisfying $\gcd(\bb)=1$. Then the proportion of points in $\mathbb{N}_{\frac{\alpha}{a_1}}\times \ldots \times \mathbb{N}_{\frac{\alpha}{a_n}}$ that are $\bf{b}$-visible is $\frac{1}{\zeta(\sum_{i=1}^kb_i)}$.
\end{duplicate}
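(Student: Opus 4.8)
The plan is to reduce Theorem~\ref{thm:main2} to Theorem~\ref{thm:main} via the bijective/visibility correspondence already established in Theorem~\ref{thm:ifandonlyif3}. The key observation is that the map
\[
\Phi\colon \N^k \longrightarrow \mathbb{N}_{\frac{\alpha}{a_1}}\times\cdots\times\mathbb{N}_{\frac{\alpha}{a_k}},\qquad
(\ell_1,\ldots,\ell_k)\longmapsto\bigl(\ell_1^{\frac{\alpha}{a_1}},\ldots,\ell_k^{\frac{\alpha}{a_k}}\bigr),
\]
is a bijection, and by Theorem~\ref{thm:ifandonlyif3} it carries $(b_1,\ldots,b_k)$-visible points of $\N^k$ exactly onto $\bb$-visible points of the codomain. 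So ``the proportion of $\bb$-visible points in $\mathbb{N}_{\frac{\alpha}{a_1}}\times\cdots\times\mathbb{N}_{\frac{\alpha}{a_k}}$'' must be interpreted as a density computed by pulling back along $\Phi$: for a bound $N$, one counts $\bb$-visible points $\bigl(\ell_1^{\alpha/a_1},\ldots,\ell_k^{\alpha/a_k}\bigr)$ with each $\ell_i\in[N]$, and divides by $N^k$. Under this (natural) interpretation the count of $\bb$-visible points with $\ell_i\le N$ equals the count of $(b_1,\ldots,b_k)$-visible points in $[N]^k$, so the two densities literally coincide.

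Concretely, the steps I would carry out are: (i) note that $\gcd(\bb)=1$ in the sense used here (an integer linear combination of the $\tfrac{b_i}{a_i}$ equals $1$) forces, after clearing denominators, that $\gcd(b_1,\ldots,b_k)$ divides a suitable integer — more carefully, one checks that the hypothesis is exactly what is needed for Theorem~\ref{thm:ifandonlyif3} to apply, and that $(b_1,\ldots,b_k)$ itself can be taken with $\gcd(b_1,\dots,b_k)=1$ (or else pass to $\tfrac{1}{\gcd}(b_1,\dots,b_k)$, which by Lemma~2 does not change visibility and does not change $\sum b_i$ up to the same scaling — here one must be slightly careful, but the $\bb$-relatively-prime condition only ever sees the exponents $b_i$, so WLOG $\gcd(b_1,\dots,b_k)=1$); (ii) invoke Lemma~\ref{lem:techresult} to restrict attention to lattice points lying in $\mathbb{N}_{\frac{\alpha}{a_1}}\times\cdots\times\mathbb{N}_{\frac{\alpha}{a_k}}$ and to the corresponding family of curves; (iii) apply Theorem~\ref{thm:ifandonlyif3} to translate $\bb$-visibility of $\Phi(\ell_1,\dots,\ell_k)$ into $(b_1,\dots,b_k)$-visibility of $(\ell_1,\dots,\ell_k)$; (iv) apply Theorem~\ref{thm:main} to the tuple $(b_1,\dots,b_k)\in\N^k$, obtaining density $\tfrac{1}{\zeta(\sum_{i=1}^k b_i)}$, and conclude that the same value is the density of $\bb$-visible points.

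The main obstacle is making the notion of ``proportion'' precise and showing the counting really transfers cleanly. Specifically: the set $\mathbb{N}_{\frac{\alpha}{a_i}}$ is sparse inside $\N$, so one cannot order its elements by magnitude and expect the naive lattice-point density to behave — one genuinely must count by the index $\ell_i$ (equivalently, the $i$-th coordinate of a point $m$ is counted as ``$\le N$'' when $m\le N^{\alpha/a_i}$). I would state this explicitly: if $V_{\bb}(N)$ denotes the number of $\bb$-visible points $(m_1,\dots,m_k)$ with $m_i\in\{1^{\alpha/a_i},\dots,N^{\alpha/a_i}\}$, then the proportion is $\lim_{N\to\infty} V_{\bb}(N)/N^k$, and by Theorem~\ref{thm:ifandonlyif3} $V_{\bb}(N)$ equals the number of $(b_1,\dots,b_k)$-visible points in $[N]^k$, which Theorem~\ref{thm:main} shows is asymptotic to $N^k/\zeta(\sum b_i)$. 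A secondary subtlety worth a sentence is the reduction in step (i): one should check that replacing $(b_1,\dots,b_k)$ by $\tfrac{1}{\gcd}(b_1,\dots,b_k)$ is harmless here precisely because $\bb$-relative primality (Definition~\ref{def:brelativelyprime1}) is unchanged, while noting that $\sum b_i$ is then the reduced sum — but since the hypothesis $\gcd(\bb)=1$ of Theorem~\ref{thm:main2} already pins down the $b_i$ up to this normalization, there is nothing lost.
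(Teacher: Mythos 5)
Your proposal is correct, but it takes a genuinely different route from the paper. The paper's proof of this theorem does not reduce to Theorem \ref{thm:main}: it reruns the probabilistic computation directly on the sparse product set, sampling $n_i^{\alpha/a_i}$ uniformly from $[N_{\frac{\alpha}{a_i}}]:=\{1^{\alpha/a_i},\ldots,\lfloor N^{a_i/\alpha}\rfloor^{\alpha/a_i}\}$ (i.e.\ truncating the \emph{values} at $N$, so the index $\ell_i$ ranges up to $N^{a_i/\alpha}$), computing $P_{p,N}\to p^{-\sum_i b_i}$ via Theorem \ref{thm:ifandonlyif3}, and passing to the Euler product. Your reduction via the bijection $\Phi$ instead truncates the \emph{indices} at $N$ and transfers the count wholesale to $[N]^k$; the two truncations give anisotropic versus cubical boxes in index space, but since the paper never pins down ``proportion'' for these sparse sets and both limits agree, your interpretation is legitimate — and it has the advantage of inheriting, for free, the careful sandwich/independence argument that Theorem \ref{thm:main} carries out and that the paper's own proof of this theorem only gestures at. One point you flag but leave unresolved should be closed: with the fractions $\frac{b_i}{a_i}$ in lowest terms, the hypothesis $\gcd(\bb)=1$ does force $\gcd(b_1,\ldots,b_k)=1$, so no renormalization of the $b_i$ (which would change $\sum_i b_i$ and hence the zeta argument) is ever needed. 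Indeed, if $\sum_i m_i\frac{b_i}{a_i}=1$ with $m_i\in\Z$ and $d=\gcd(b_1,\ldots,b_k)$, multiplying by $\alpha$ gives $\sum_i m_i b_i\frac{\alpha}{a_i}=\alpha$, so $d\mid\alpha$; but $\gcd(b_i,a_i)=1$ forces $\gcd(d,a_i)=1$ for every $i$, hence $\gcd(d,\alpha)=1$ and $d=1$. With that observation inserted, your step (i) is no longer a ``WLOG'' but a verified fact, and the rest of the reduction is airtight.
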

\begin{proof}
Define $[N_{\frac{\alpha}{a_i}}]:=\{1^{\frac{\alpha}{a_i}},2^{\frac{\alpha}{a_i}},\ldots, {\left \lfloor{N^{\frac{a_i}{\alpha}}} \right \rfloor}^{\frac{\alpha}{a_i}} \}$ for $i=1,\ldots,k$. Let $n_1^{\frac{\alpha}{a_1}},\ldots,n_k^{\frac{\alpha}{a_k}}$ be picked independently with uniform probability in $[N_{\frac{\alpha}{a_1}}],\ldots$, $[N_{\frac{\alpha}{a_k}}]$, respectively.

Fix a prime $p$. Then the probability that $p^{b_i}|n_i^{\frac{\alpha}{a_i}}$, is thus given by \[\frac{1}{\left \lfloor{N^{\frac{a_i}{\alpha}}}\right \rfloor}\cdot \left \lfloor{\frac{\left \lfloor{N^{\frac{a_i}{\alpha}}}\right \rfloor}{p^{b_i}}} \right \rfloor.\]
By mutual independence, the probability $P_{p,N}$ that $p^{b_i}|n_i^{\frac{\alpha}{a_i}}$ for all $i=1,\ldots,k$ is given by
$$\prod\limits_{i=1}^k\frac{1}{\left \lfloor{N^{\frac{a_i}{\alpha}}}\right \rfloor}\cdot \left \lfloor{\frac{\left \lfloor{N^{\frac{a_i}{\alpha}}}\right \rfloor}{p^{b_i}}} \right \rfloor.$$
And since $P_{p,N}\to \prod\limits_{i=1}^k\frac{1}{p^{b_i}}=\frac{1}{p^{\sum_{i}b_i}}$ as $N\to \infty$, it is a consequence of Theorem \ref{thm:ifandonlyif3} that
\[\lim_{N\to \infty}\prod_{\substack{\text{$p$ prime}\\p\leq N}}(1-P_{p,N})=\prod_{\text{$p$ prime}}(1-\frac{1}{p^{\sum_{i}b_i}})=\frac{1}{\zeta(\sum_{i}b_i)}.\qedhere\]
\end{proof}

We remark that in \cite{HarrisOmar2017}, Harris and Omar considered the case when $\bb=(1,b/a)$ with $b/a\in\Q$. When $b$ was a negative integer, they thought of a point $(r,s)\in\N^2$ as being visible (or invisible) from a point at infinity, i.e. located at $(\infty,0)$. From this they established that the proportion of visible lattice points in $\N_a\times \N$, where $\N_a=\{1^a,2^a,3^a,\ldots\}$, was given by $1/{\zeta(b)}$. However, their study considered the number-theoretic approach where the point $(r,s)$ was $(b/a)$-visible provided there was no prime $p$ such that $p^b|s$. Taking this number-theoretic approach we now establish our final result.

\begin{duplicate}%[Theorem~\ref{thm:main3}]
% \begin{theorem}\label{thm:ifandonlyif6}
Let ${\bf{b}}=(\frac{b_1}{a_1},\ldots,\frac{b_k}{a_k})\in({\mathbb{Q}^{*}})^k$ be such that its negative entries are indexed by the set $J \subseteq [k]$, with $\bf{b}$ satisfying the conditions that $a_1,a_2,\ldots,a_k\in\mathbb{N}$ and  $\gcd(\frac{b_1}{a_1},\frac{b_2}{a_2},\ldots,\frac{b_k}{a_k})=1$. Then the proportion of points in $\mathbb{N}_{\frac{\alpha}{a_1}}\times \cdots \times \mathbb{N}_{\frac{\alpha}{a_n}}$ that are $\bf{b}$-visible is $\frac{1}{\zeta(\sum_{j\in J}|b_j|)}$.
\end{duplicate}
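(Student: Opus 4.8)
The plan is to combine the number-theoretic characterization of Theorem~\ref{thm:ifandonlyif4} with the probabilistic counting machinery already developed in the proof of Theorem~\ref{thm:main} (the first duplicate). By Theorem~\ref{thm:ifandonlyif4}, a point ${\bf{n}}=(\ell_1^{\alpha/a_1},\ldots,\ell_k^{\alpha/a_k})\in \mathbb{N}_{\frac{\alpha}{a_1}}\times\cdots\times\mathbb{N}_{\frac{\alpha}{a_k}}$ is ${\bf{b}}$-visible if and only if there is no prime $p$ with $p^{|b_j|}\mid \ell_j$ for every $j\in J$. The crucial observation is that this condition involves \emph{only} the coordinates indexed by $J$: the coordinates $h\notin J$ are completely unconstrained. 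So the proportion of ${\bf{b}}$-visible points is exactly the proportion, among tuples $(\ell_j)_{j\in J}$, that are $(|b_j|)_{j\in J}$-relatively prime in the sense of Definition~\ref{def:brelativelyprime1}, and this is precisely the quantity Theorem~\ref{thm:main} computes (applied to the index set $J$ with exponent vector $(|b_j|)_{j\in J}$).

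First I would set up the sampling exactly as in the earlier proofs: define $[N_{\frac{\alpha}{a_i}}]:=\{1^{\alpha/a_i},\ldots,\lfloor N^{a_i/\alpha}\rfloor^{\alpha/a_i}\}$ and pick $\ell_i^{\alpha/a_i}$ uniformly and independently from $[N_{\frac{\alpha}{a_i}}]$ for each $i=1,\ldots,k$; equivalently pick $\ell_i$ uniformly from $[\lfloor N^{a_i/\alpha}\rfloor]$. For a fixed prime $p$, let $D_{p,N}$ be the event that $p^{|b_j|}\mid \ell_j$ for all $j\in J$. Since the $\ell_i$ are independent and the event ignores all $i\notin J$, we get $P_N(D_{p,N})=\prod_{j\in J}\frac{1}{\lfloor N^{a_j/\alpha}\rfloor}\lfloor \lfloor N^{a_j/\alpha}\rfloor / p^{|b_j|}\rfloor$, which tends to $\prod_{j\in J} p^{-|b_j|}=p^{-\sum_{j\in J}|b_j|}$ as $N\to\infty$. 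The independence of the events $D_{p_1,N},\ldots,D_{p_r,N}$ (after passing to $N$ divisible by a suitable $R=\prod_{i=1}^r p_i^{\max_j |b_j|}$ or similar, so the floors are exact) follows by the same coprime-primes argument as in the Claim in the proof of Theorem~\ref{thm:main}, now carried out on the coordinates indexed by $J$.

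With these two inputs in hand, the remainder is a verbatim repetition of the inclusion--exclusion / subadditivity estimates in the proof of Theorem~\ref{thm:main}: one sandwiches $P_N(\bigcap_{i=1}^\infty D_{p_i,N}^c)$ between $\prod_{i=1}^r(1-P_N(D_{p_i,N})) - \sum_{i\geq r} p_i^{-\sum_{j\in J}|b_j|}$ and $\prod_{i=1}^r(1-P_N(D_{p_i,N}))$, handles the case $R\nmid N$ by the monotonicity-in-$N$ squeeze between $N_1=Rk_1$ and $N_2=Rk_2$, then lets $N\to\infty$ and $r\to\infty$. The limit is $\prod_{p\ \mathrm{prime}}\bigl(1-p^{-\sum_{j\in J}|b_j|}\bigr)=1/\zeta\bigl(\sum_{j\in J}|b_j|\bigr)$, as claimed. (One should note that $\sum_{j\in J}|b_j|\geq 2$ is needed for convergence of the relevant series and of the Euler product to a nonzero value; since $J\neq\emptyset$ requires at least one negative entry, and $\gcd({\bf b})=1$ rules out $J$ being a single index with $|b_j|=1$ unless there are other entries, this should be addressed — if $\sum_{j\in J}|b_j|=1$ the proportion degenerates to $0$, consistent with $1/\zeta(1)$.)

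\textbf{Main obstacle.} I expect the only real subtlety is bookkeeping: making sure that ignoring the coordinates $h\notin J$ is fully justified at the measure-theoretic level (it is, by independence of the coordinates), and that the floor-function errors coming from the $a_i/\alpha$ exponents — which already appeared in the proof of Theorem~\ref{thm:main2} — vanish in the limit uniformly enough to run the same squeeze argument. Neither is genuinely hard; the theorem is essentially Theorem~\ref{thm:main} applied to the sub-tuple indexed by $J$, with Theorem~\ref{thm:ifandonlyif4} supplying the reduction. So rather than reproduce the entire computation, I would state the reduction explicitly and invoke the proof of Theorem~\ref{thm:main} (equivalently Theorem~\ref{thm:main2}) applied to $J$ and $(|b_j|)_{j\in J}$.
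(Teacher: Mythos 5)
Your proposal is correct and follows essentially the same route as the paper: the paper's own proof is a one-line reduction citing Theorem~\ref{thm:ifandonlyif4} and Remark~\ref{remark:3} and declaring the rest analogous to the previous theorem, which is exactly the reduction-to-the-$J$-coordinates-plus-probabilistic-squeeze that you spell out. Your additional remark about needing $\sum_{j\in J}|b_j|\geq 2$ for the Euler product to converge to a nonzero value is a legitimate point the paper glosses over, but it does not change the approach.
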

\begin{proof}
The proof of this theorem is analogous to the proof of the previous theorem, and is a consequence of Theorem \ref{thm:ifandonlyif4} and Remark \ref{remark:3}.
\end{proof}

\begin{remark}
Recall that in \cite{HarrisOmar2017}, Harris and Omar obtained that the density of $\bf{b}$-visible points for ${\bf{b}}=\frac{b}{a}$, was $\frac{1}{\zeta(b)}$ if $b$ is negative, whereas the density is $\frac{1}{\zeta(b+1)}$ if $b$ is positive. Our last theorem explains the reason for the difference between these densities: when the vector $\bf{b}$ contains both positive and negative entries, only the negative entries matter. 
\end{remark}

\end{document}